\providecommand{\U}[1]{\protect\rule{.1in}{.1in}}
\newtheorem{theorem}{Theorem}
\newtheorem{definition}[theorem]{Definition}
\newtheorem{lemma}[theorem]{Lemma}
\newtheorem{remark}[theorem]{Remark}
\begin{document}

\title[Fractional type integral operators on variable Hardy spaces]{Fractional type integral operators on variable Hardy spaces}
\author{Pablo Rocha and Marta Urciuolo \\ \textit{Version 11-06-2016}}
\address{Facultad de Matematica, Astronomia y Fisica, CIEM (Conicet), Universidad
Nacional de Cordoba, Ciudad Universitaria, 5000 Cordoba, Argentina}
\email{rp@famaf.unc.edu.ar and urciuolo@famaf.unc.edu.ar}
\thanks{\textbf{Key words and phrases}: Hardy spaces, Variable Exponents, Fractional Operators.}
\thanks{\textbf{2.010 Math. Subject Classification}: 42B25, 42B35.}
\thanks{Partially supported by Conicet and SecytUNC}
\maketitle

\begin{abstract}
Given certain $n\times n$ invertible matrices $A_{1},...,$\thinspace $A_{m}$
and $0\leq \alpha <n,$ in this paper we obtain the $H^{p(.)}\left( \mathbb{R}%
^{n}\right) \rightarrow L^{q(.)}\left( \mathbb{R}^{n}\right) $ boundedness
of the integral operator with kernel $k(x,y)=\left\vert x-A_{1}y\right\vert
^{-\alpha _{1}}...\left\vert x-A_{m}y\right\vert ^{-\alpha _{m}},$ where $%
\alpha _{1}+...+\alpha _{m}=n-\alpha $ and $p(.),$ $q(.)$ are exponent
functions satisfying log-H\"{o}lder continuity conditions locally and at
infinity related by $\frac{1}{q(.)}=\frac{1}{p(.)}-\frac{\alpha }{n}$. \ We
also obtain the $H^{p(.)}\left( \mathbb{R}^{n}\right) \rightarrow
H^{q(.)}\left( \mathbb{R}^{n}\right) $ boundedness of the Riesz potential
operator.
\end{abstract}

\section{\protect\bigskip Introduction}

Given a measurable function $p(.):\mathbb{R}^{n}\rightarrow \left( 0,\infty
\right) $ such that $0<\inf\limits_{x\in \mathbb{R}^{n}}p(x)\leq
\sup\limits_{x\in \mathbb{R}^{n}}p(x)<\infty ,$ let $L^{p(.)}\left( \mathbb{R%
}^{n}\right) $ denote the space of measurable functions such that for some $%
\lambda >0,$%
\[
\int \left\vert \frac{f(x)}{\lambda }\right\vert ^{p(x)}dx<\infty .
\]%
We set%
\[
\left\Vert f\right\Vert _{p(.)}=\inf \left\{ \lambda >0:\int \left\vert
\frac{f(x)}{\lambda }\right\vert ^{p(x)}dx\leq 1\right\} .
\]

We see that $\left( L^{p(.)}\left( \mathbb{R}^{n}\right) ,\left\Vert
f\right\Vert _{p(.)}\right) $ is a quasi normed space. As usual we will
denote $p_{+}=\sup\limits_{x\in \mathbb{R}^{n}}p(x)$ and $%
p_{-}=\inf\limits_{x\in \mathbb{R}^{n}}p(x).$

These spaces are referred to as the variable $L^{p}$ spaces. In the last
years many authors have extended tha machinery af classical harmonic
analysis to these spaces. See, for example \cite{C-F}, \cite{C-F-N}, \cite{D}, \cite{D-R}, \cite{K-R}.

In the famous paper \cite{F-S}, C. Fefferman and E. Stein defined the Hardy
space $H^{p}\left( \mathbb{R}^{n}\right) ,$ $0<p<\infty ,$ with the norn
given by
\[
\left\Vert f\right\Vert _{H^{p}}=\left\Vert
\sup\limits_{t>0}\sup\limits_{\varphi \in \mathcal{F}_{N}}\left\vert
t^{-n}\varphi (t^{-1}.)\ast f\right\vert \right\Vert _{p},
\]%
for a suitable family $\mathcal{F}_{N}.$ In the paper \cite{N-S}, E. Nakai
and Y. Sawano defined the Hardy spaces with variable exponents, replacing $%
L^{p}$ by $L^{p(.)}$ in the above norm and they investigate their several
properties.

Let $0\leq \alpha <n,$ let $p(.):\mathbb{R}^{n}\rightarrow \left( 0,\infty
\right) $ be a measurable function, and let $q(.)$ be defined by $\frac{1}{%
q(.)}=\frac{1}{p(.)}-\frac{\alpha }{n}.$ Given certain invertible matrices $%
A_{1},...A_{m},$ $m\geq 1,$ we study
\begin{equation}
T_{\alpha }f(x)=\int \left\vert x-A_{1}y\right\vert ^{-\alpha
_{1}}...\left\vert x-A_{m}y\right\vert ^{-\alpha _{m}}f(y)dy,  \label{T}
\end{equation}%
where $\alpha _{1}+...+\alpha _{m}=n-\alpha .$ We observe that in the case $%
\alpha >0,$ $m=1$ and $A_{1}=I,$ $T$ is the classical fractional integral
operator (also known as the Riesz potential) $I_{\alpha }.$

With respect to classical Lebesgue or Hardy spaces, in the case $m>1,$ in
the paper \cite{R-U}, we obtained the $H^{p}(\mathbb{R}^{n})-L^{q}(\mathbb{R}%
^{n})$ boundedness of these operators and we show that we cannot expect the $%
H^{p}(\mathbb{R}^{n})-H^{q}(\mathbb{R}^{n})$ boundedness of them.This is an
important difference with the case $m=1.$ Indeed, in the paper \cite{T-W},
M. Taibleson and G. Weiss, using the molecular characterization of the real
Hardy spaces, obtained the boundedness of $I_{\alpha }$ from $H^{p}(\mathbb{R%
}^{n})$ into $H^{q}(\mathbb{R}^{n}),$ $0<p\leq 1.$ In this paper we will
extend both results to the setting of variable exponents. Here and below we
shall postulate the following conditions on $p(.),$

\begin{equation}
\left\vert p(x)-p(y)\right\vert \leq \frac{c}{-\log \left\vert
x-y\right\vert },\text{ }\left\vert x-y\right\vert <\frac{1}{2},
\label{log local}
\end{equation}%
and\textit{\ }%
\begin{equation}
\left\vert p(x)-p(y)\right\vert \leq \frac{c}{\log \left( e+\left\vert
x\right\vert \right) },\text{ }\left\vert y\right\vert \geq \left\vert
x\right\vert .  \label{log fuera}
\end{equation}

We note that the condition (\ref{log fuera}) is equivalent to the existence
of constants $C_{\infty }$ and $p_{\infty }$ such that%
\begin{equation}
\left\vert p(x)-p_{\infty }\right\vert \leq \frac{C_{\infty }}{\log \left(
e+\left\vert x\right\vert \right) },\qquad x\in \mathbb{R}^{n}.
\label{log fuera p infinito}
\end{equation}

In Section 2 we recall the definition and atomic decomposition of the Hardy
spaces with variable exponents given in \cite{N-S}. We also state three
crucial lemmas, two of them refering to estimations of the $L^{p(.)}\left(
\mathbb{R}^{n}\right) $ norm of the characteristic functions of cubes and
the other one about the vector valued boundedness of the fractional maximal
operator.

In Section 3 we obtain the $H^{p(.)}(\mathbb{R}^{n})-L^{q(.)}(\mathbb{R}%
^{n}) $ boundedness of the operator $T_{\alpha }$ corresponding to the case $%
m>1,$ where $p(.)$ is an exponent function satisfying the log-H\"{o}lder
continuity conditions (\ref{log local}) and (\ref{log fuera p infinito}),
such that $p(A_{i}x)=p(x),$ $x\in \mathbb{R}^{n},$ $1\leq i\leq m,$ $%
0<p_{-}\leq p_{+}<\frac{n}{\alpha }$ and $\frac{1}{q(.)}=\frac{1}{p(.)}-%
\frac{\alpha }{n}.$

In Section 4 we get the $H^{p(.)}(\mathbb{R}^{n})-H^{q(.)}(\mathbb{R}^{n})$
boundedness of the Riesz potential $I_{\alpha }$ where $p(.)$ satisfies (\ref%
{log local}), (\ref{log fuera p infinito}), $0<p_{-}\leq p_{+}<\frac{n}{%
\alpha }$ and $\frac{1}{q(.)}=\frac{1}{p(.)}-\frac{\alpha }{n}.$

\textbf{Notation }The symbol $A\lesssim B$ stands for the inequality $A\leq
cB$ for some constant $c.$ The symbol $A\sim B$ stands for $B\lesssim
A\lesssim B.$ We denote by $Q\left( z,r\right) $ the cube centered at $%
z=(z_{1},...z_{n})$ with side lenght $r.$ Given a cube $Q=Q\left( z,r\right)
,~$we set $kQ=Q(z,kr)$ and $l\left( Q\right) =r.$ For a measurable subset $%
E\subseteq \mathbb{R}^{n}$ we denote by $\left\vert E\right\vert $ and $\chi
_{E}$ the Lebesgue measure of $E$ and the characteristic function of $E$
respectively. For a function $p(.):\mathbb{R}^{n}\rightarrow \left( 0,\infty
\right) $ we define $\underline{p}=\min \left( p_{-},1\right)$; also given a cube $Q$ define
$p_{-}(Q)= \inf \{p(x) : x \in Q\}$ and $p_{+}(Q)= \sup \{p(x) : x \in Q\}$. As usual we
denote with $S(\mathbb{R}^{n})$ the space of smooth and rapidly decreasing
functions and with $S^{\prime }(\mathbb{R}^{n})$ the dual space. If $\mathbf{%
\beta }$ is the multiindex $\mathbf{\beta =(\beta }_{1},...,\beta _{n})$
then $\left\vert \mathbf{\beta }\right\vert =\beta _{1}+...+\beta _{n}.$

\section{Preliminaries}

Given a measurable function $p(.):\mathbb{R}^{n}\rightarrow \left( 0,\infty
\right) $ such that $0<p_{-}\leq p_{+}<\infty ,$ in the paper \cite{N-S} E,
Nakai and Y. Sawano give a variety of distinct approaches, based on
differing definitions, all lead to the same notion of the Hardy space $%
H^{p(.)}.$

\begin{definition}
Define $\mathcal{F}_{N}=\left\{ \varphi \in S(\mathbb{R}^{n}):\sum\limits_{%
\left\vert \mathbf{\beta }\right\vert \leq N}\sup\limits_{x\in \mathbb{R}%
^{n}}\left( 1+\left\vert x\right\vert \right) ^{N}\left\vert \partial ^{%
\mathbf{\beta }}\varphi (x)\right\vert \leq 1\right\} $. Let $f\in S^{\prime
}(\mathbb{R}^{n}).$ Denote by $\mathcal{M}$ the grand maximal operator given
by
\[
\mathcal{M}f(x)=\sup\limits_{t>0}\sup\limits_{\varphi \in \mathcal{F}%
_{N}}\left\vert \left( t^{-n}\varphi (t^{-1}.)\ast f\right) \left( x\right)
\right\vert ,
\]%
where $N$ is a large and fixed integer. The variable Hardy space $%
H^{p(.)}\left( \mathbb{R}^{n}\right) $ is the set of all $f\in S^{\prime }(%
\mathbb{R}^{n})$ for which $\left\Vert \mathcal{M}f\right\Vert
_{p(.)}<\infty $. In this case we define $\left\Vert f\right\Vert
_{H^{p(.)}}=\left\Vert \mathcal{M}f\right\Vert _{p(.)}$.
\end{definition}

\begin{definition}
$((p(.),p_{0},d)-atom)$. Let $p(.):\mathbb{R}^{n}\rightarrow \left( 0,\infty
\right) $, $0<p_{-}\leq p_{+}<p_{0}\leq \infty $ and $p_{0}\geq 1.$ Fix an
integer $d\geq d_{p(.)}=\min \left\{ l\in \mathbb{N\cup }\left\{ 0\right\}
:p_{-}(n+l+1)>n\right\} .$ A function $a$ on $\mathbb{R}^{n}$ is called a $%
(p(.),p_{0},d)-$atom if there exists a cube $Q$ such that\newline
$a_{1})$ $supp\left( a\right) \subset Q,$\newline
$a_{2})$ $\left\Vert a\right\Vert _{p_{0}}\leq \frac{\left\vert Q\right\vert
^{\frac{1}{p_{0}}}}{\left\Vert \chi _{Q}\right\Vert _{p(.)}},$\newline
$a_{3})$ $\int a(x)x^{\alpha }dx=0$ for all $\left\vert \alpha \right\vert
\leq d.$
\end{definition}

\begin{definition}
For sequences of nonnegative numbers $\left\{ \lambda_{j}\right\} _{j=1}^{\infty }$
and cubes $\left\{ Q_{j}\right\} _{j=1}^{\infty }$ and for a function $p(.):%
\mathbb{R}^{n}\rightarrow \left( 0,\infty \right) $, we define
\[
\mathcal{A}\left( \left\{ \lambda_{j}\right\} _{j=1}^{\infty },\left\{
Q_{j}\right\} _{j=1}^{\infty },p(.)\right) =\left\Vert \left\{
\sum\limits_{j=1}^{\infty }\left( \frac{\lambda_{j}\chi _{Q_{j}}}{\left\Vert \chi
_{Q_{j}}\right\Vert _{p(.)}}\right) ^{\underline{p}}\right\} ^{\frac{1}{%
\underline{p}}}\right\Vert _{p(.)}.
\]%
\newline
The space $H_{atom}^{p(.),p_{0},d}\left( \mathbb{R}^{n}\right) $ is the set
of all distributions $f\in S^{\prime }(\mathbb{R}^{n})$ such that it can be
written as
\begin{equation}
f=\sum\limits_{j=1}^{\infty }\lambda_{j}a_{j}  \label{desc. atomica}
\end{equation}%
in $S^{\prime }(\mathbb{R}^{n}),$ where $\left\{ \lambda_{j}\right\}
_{j=1}^{\infty }$ is a sequence of non negative numbers, the $a_{j}`s$ are $%
(p(.),p_{0},d)-$atoms and $\mathcal{A}\left( \left\{ \lambda_{j}\right\}
_{j=1}^{\infty },\left\{ Q_{j}\right\} _{j=1}^{\infty },p(.)\right) <\infty
. $ One defines
\[
\left\Vert f\right\Vert _{H_{atom}^{p(.),p_{0},d}}=\inf \mathcal{A}\left(
\left\{ \lambda_{j}\right\} _{j=1}^{\infty },\left\{ Q_{j}\right\} _{j=1}^{\infty
},p(.)\right)
\]%
where the infimun is taken over all admissible expressions as in (\ref{desc.
atomica}).
\end{definition}

Theorem 4.6 in \cite{N-S} asserts that $\left\Vert f\right\Vert
_{H_{atom}^{p(.),p_{0},d}}\sim \left\Vert f\right\Vert _{H^{p(.)}},$ thus we
will study the behavior of the operators $T_{\alpha }$ on atoms.

The following lemmas are crucial to get the principal results.

\begin{lemma}
$($Lemma 2.2. in \cite{N-S}$)$ Suppose that $p(.)$ is a function satisfying (\ref%
{log local}), (\ref{log fuera p infinito}) and $0<p_{-}\leq p_{+}<\infty .$%
\newline
$1)$ For all cubes $Q=Q(z,r)$ with $z\in \mathbb{R}^{n}$ and $r\leq 1,$ we
have
\[
\left\vert Q\right\vert ^{\frac{1}{p_{-}(Q)}}\lesssim \left\vert
Q\right\vert ^{\frac{1}{p_{+}(Q)}}.
\]%
In particular, we have
\[
\left\vert Q\right\vert ^{\frac{1}{p_{-}(Q)}}\sim \left\vert Q\right\vert ^{%
\frac{1}{p_{+}(Q)}}\sim \left\vert Q\right\vert ^{\frac{1}{p(z)}}\sim
\left\Vert \chi _{Q}\right\Vert _{L^{p(.)}}.
\]%
$2)$ For all cubes $Q=Q(z,r)$ with $z\in \mathbb{R}^{n}$ and $r\geq 1,$ we
have
\[
\left\vert Q\right\vert ^{\frac{1}{p_{\infty }}}\sim \left\Vert \chi
_{Q}\right\Vert _{L^{p(.)}}.
\]%
Here the implicit constants in $\sim $ do not depend on $z$ and $r>0.$
\end{lemma}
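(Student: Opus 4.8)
The idea is to reduce everything to a modular estimate. Since $\left\Vert \chi_Q\right\Vert_{p(.)}$ is the unique $\lambda^{*}$ with $\rho_Q(\lambda^{*}):=\int_Q(\lambda^{*})^{-p(x)}\,dx=1$ (the modular $\rho_Q$ being continuous, strictly decreasing on $(0,\infty)$, with limits $\infty$ at $0^{+}$ and $0$ at $\infty$, using $0<|Q|<\infty$ and $0<p_{-}\le p_{+}<\infty$), it suffices to exhibit, for a candidate value $\lambda_0$, constants $0<c_1\le1\le c_2$ independent of $Q$ with $c_1\le\rho_Q(\lambda_0)\le c_2$: indeed $\mu^{-p(x)}\le\mu^{-p_{-}}$ for $\mu\ge1$ and $\mu^{-p(x)}\ge\mu^{-p_{+}}$ for $0<\mu\le1$ then give $\rho_Q\bigl(c_2^{1/p_{-}}\lambda_0\bigr)\le1\le\rho_Q\bigl(c_1^{1/p_{+}}\lambda_0\bigr)$, whence $\left\Vert \chi_Q\right\Vert_{p(.)}\sim\lambda_0$. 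I take $\lambda_0=|Q|^{1/p(z)}$ in $1)$ and $\lambda_0=|Q|^{1/p_{\infty}}$ in $2)$; writing $\rho_Q(\lambda_0)=|Q|^{-1}\int_Q|Q|^{(p(z)-p(x))/p(z)}\,dx$ (respectively with $p_{\infty}$ in place of $p(z)$), the goal becomes to prove $\int_Q|Q|^{(p(z)-p(x))/p(z)}\,dx\sim|Q|$ when $r\le1$ and $\int_Q|Q|^{(p_{\infty}-p(x))/p_{\infty}}\,dx\sim|Q|$ when $r\ge1$.

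For $1)$, when $r\ge(2\sqrt n)^{-1}$ the claim is trivial since $|Q|\sim1$ and $p_{-}\le p(\cdot)\le p_{+}$. For $r<(2\sqrt n)^{-1}$ we have $\mathrm{diam}\,Q=\sqrt n\,r<\tfrac12$, so (\ref{log local}) gives $p_{+}(Q)-p_{-}(Q)\lesssim 1/(-\log r)$; since $\log|Q|=n\log r$, for all $x\in Q$, $\bigl|\tfrac{p(z)-p(x)}{p(z)}\log|Q|\bigr|\le\tfrac{p_{+}(Q)-p_{-}(Q)}{p_{-}}\,n\,|\log r|\lesssim n/p_{-}$, hence $|Q|^{(p(z)-p(x))/p(z)}\sim1$ on $Q$ and the integral estimate follows. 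The same bound shows $|Q|^{1/p_{-}(Q)-1/p_{+}(Q)}\sim1$; combined with the elementary inequality $|Q|^{1/p_{-}(Q)}\le|Q|^{1/p_{+}(Q)}$ and with $p_{-}(Q)\le p(z)\le p_{+}(Q)$, this yields the stated chain $|Q|^{1/p_{-}(Q)}\sim|Q|^{1/p_{+}(Q)}\sim|Q|^{1/p(z)}\sim\left\Vert \chi_Q\right\Vert_{p(.)}$.

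For $2)$, the lower bound is immediate: on $Q':=Q\setminus B(0,r/4)$ one has $|Q'|\ge|Q|/2$, and for $x\in Q'$ both $|x|\ge r/4$ and $|x|\lesssim|z|+r$, so (\ref{log fuera p infinito}) gives $\bigl|\tfrac{p_{\infty}-p(x)}{p_{\infty}}\log|Q|\bigr|\le\tfrac{nC_{\infty}}{p_{\infty}}\cdot\tfrac{\log r}{\log(e+r/4)}\lesssim1$, so the integrand is $\gtrsim1$ on $Q'$ and the integral is $\gtrsim|Q|$. The upper bound is the crux. If $|z|\ge\sqrt n\,r$, then $|x|>|z|/2\ge r/2$ on $Q$ and the same estimate shows the integrand is $\lesssim1$ throughout $Q$. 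Otherwise $Q\subset B(0,\tfrac32\sqrt n\,r)$, and I decompose $Q=F_0\sqcup\bigsqcup_{k\ge1}F_k$ with $F_0=Q\cap B(0,2)$, $F_k=Q\cap\{2^{k}\le|x|<2^{k+1}\}$, which is nonempty only for $k\le K:=\log_2 r+O_n(1)$. On $F_0$ and on the finitely many $F_k$ with $k$ at most a fixed constant $k_0$, one uses only $p(x)\ge p_{-}$: the integrand is $\le|Q|^{1-p_{-}/p_{\infty}}=|Q|\cdot|Q|^{-p_{-}/p_{\infty}}\le|Q|$, and since $|F_0|$ and the $|F_k|$ there are bounded, these contribute $\lesssim|Q|$. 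For $k>k_0$, (\ref{log fuera p infinito}) gives $|p(x)-p_{\infty}|\le C_{\infty}/\log(e+2^{k})\le C_{\infty}/(k\log 2)$, hence $\int_{F_k}|Q|^{(p_{\infty}-p(x))/p_{\infty}}\,dx\lesssim 2^{kn}\,r^{\beta/k}$ with $\beta=nC_{\infty}/(p_{\infty}\log 2)$.

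It then remains to bound $\sum_{k_0<k\le K}2^{kn}r^{\beta/k}$ by $\lesssim r^{n}=|Q|$, and this is where the main work lies. The exponent $\Theta(k)=kn+(\beta\log_2 r)/k$ is convex with minimum near $k\sim\sqrt{\beta\log_2 r}$; one has $\Theta'(K)\to n>0$, so the terms decay geometrically as $k$ decreases from $K$, while near the left endpoint the chord inequality for convex functions forces $2^{\Theta(k)}\le2^{\Theta(k_0)}\lesssim r^{n}$ with geometric decay as $k$ increases, provided $k_0$ is chosen so that $\beta/k_0\le n(1-p_{-}/p_{\infty})<n$. Thus the sum is dominated by a convergent geometric series times $r^{n}$. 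The delicate point is exactly this summation: the per-annulus bounds, although correct, are far from sharp for intermediate $k$, so one must avoid the naive ``largest term times number of terms'' estimate (which would cost a spurious factor $\log|Q|$) and instead use the convexity of $\Theta$ together with the strict positivity $p_{-}>0$. The bounded range of $r$ and the degenerate cases $p_{\infty}\in\{p_{-},p_{+}\}$ are handled directly and cause no trouble.
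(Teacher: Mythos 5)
Your argument is essentially correct, but it is worth saying up front that the paper does not prove this lemma at all: it is quoted verbatim as Lemma~2.2 of Nakai--Sawano \cite{N-S}, so there is no in-paper proof to compare against. What you have done is reconstruct a self-contained proof along the same lines as the source: reduce $\Vert\chi_Q\Vert_{p(.)}\sim\lambda_0$ to the two-sided modular bound $\rho_Q(\lambda_0)\sim 1$, then use (\ref{log local}) to show $|Q|^{(p(z)-p(x))/p(z)}\sim 1$ on small cubes, and (\ref{log fuera p infinito}) with a dyadic decomposition in $|x|$ for large cubes. The structure is sound, the small-cube case and the lower bound and far-from-origin case for large cubes are complete, and the one genuinely delicate step --- summing $\sum_k 2^{kn}r^{\beta/k}$ without losing a factor of $\log r$ --- is correctly identified and correctly resolved: convexity of $\Theta(k)=kn+(\beta\log_2 r)/k$ puts $\Theta$ below its chord on $[k_0,K]$, the chord's slope is $n-\beta/k_0+o(1)$, which is bounded below by a positive constant once $k_0>\beta/n$ is fixed, and summing the resulting geometric series from the right endpoint gives $\lesssim 2^{\Theta(K)}\sim r^n$. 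Two small repairs: first, for $0<\mu\le1$ the correct elementary inequality is $\mu^{-p(x)}\ge\mu^{-p_{-}}$ (not $\mu^{-p_{+}}$, which goes the wrong way), so the lower test value should be $c_1^{1/p_{-}}\lambda_0$; this changes only the constant. Second, the condition you impose on $k_0$, namely $\beta/k_0\le n(1-p_{-}/p_{\infty})$, is stronger than needed and degenerates when $p_{-}=p_{\infty}$; the summation only requires $\beta/k_0<n$, i.e.\ $k_0>C_{\infty}/(p_{\infty}\log 2)$, which is always achievable. With those adjustments the proof stands, and its value relative to the paper is precisely that it makes the imported lemma self-contained by an elementary modular computation rather than an appeal to \cite{N-S}.
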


\begin{lemma}
Let $A$ be an $n \times n$ invertible matrix. Let $p(.) : \mathbb{R}^{n} \rightarrow (0, \infty)$ be a function satisfying
(\ref{log local}), (\ref{log fuera p infinito}),
$0 < p_{-} \leq p_{+} < \infty$ and $p(Ax) = p(x)$ for all $x \in \mathbb{R}^{n}$. Given a sequence of cubes $Q_j = Q(z_j, r_j)$, we set $Q_j^{\ast} = Q(Az_j, 4 D r_j)$ for each $j \in \mathbb{N}$, where $D=\| A \|$. Then
$$\mathcal{A}\left( \left\{ \lambda_{j}\right\}
_{j=1}^{\infty },\left\{ Q_{j}^{\ast}\right\} _{j=1}^{\infty },p(.)\right) \lesssim
\mathcal{A}\left( \left\{ \lambda_{j}\right\}
_{j=1}^{\infty },\left\{ Q_{j}\right\} _{j=1}^{\infty },p(.)\right),$$
for all sequences of nonnegative numbers $\{ \lambda_j \}_{j=1}^{\infty}$ and cubes $\{ Q_j \}_{j=1}^{\infty}$.
\end{lemma}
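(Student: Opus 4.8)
The plan is to compare the two $\mathcal{A}$-quantities by unwinding the definition and reducing everything to a pointwise comparison of the quasi-norms $\|\chi_{Q_j}\|_{p(.)}$ and $\|\chi_{Q_j^\ast}\|_{p(.)}$ together with a geometric containment of the cubes $Q_j^\ast$ inside dilates of the $Q_j$. First I would record the two facts that make the argument work. The hypothesis $p(Ax)=p(x)$ together with the log-H\"older conditions forces, via Lemma 2.2, that $\|\chi_Q\|_{p(.)}\sim\|\chi_{AQ}\|_{p(.)}$ for any cube $Q$ (with constants independent of the cube): indeed $|AQ|=|\det A|\,|Q|$, and for small cubes both norms are comparable to $|Q|^{1/p(z)}=|AQ|^{1/p(Az)/\cdot}$ after accounting for the determinant factor, using $p(z)=p(Az)$; for large cubes both are comparable to a power of $|Q|$ with exponent $1/p_\infty$. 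Second, since $D=\|A\|$, the image $A\,Q(z_j,r_j)$ is contained in the ball of radius $\sqrt{n}\,D\,r_j$ about $Az_j$, hence in the cube $Q(Az_j, 2\sqrt n D r_j)\subseteq Q_j^\ast=Q(Az_j,4Dr_j)$ once $n\ge1$ is absorbed; in any case $Q_j^\ast$ is a fixed dilate of $A Q_j$, so $\chi_{Q_j^\ast}\le \chi_{c\,(AQ_j)}$ and $|Q_j^\ast|\sim|AQ_j|\sim|Q_j|$.

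Next I would combine these. On one hand, $\|\chi_{Q_j^\ast}\|_{p(.)}\gtrsim \|\chi_{AQ_j}\|_{p(.)}\sim \|\chi_{Q_j}\|_{p(.)}$ since $Q_j^\ast\supseteq AQ_j$ up to the small-factor inclusion above (monotonicity of the norm), and also $\|\chi_{Q_j^\ast}\|_{p(.)}\lesssim \|\chi_{Q_j}\|_{p(.)}$ because $Q_j^\ast$ is a bounded dilate of $AQ_j$ and dilating a cube by a fixed factor changes its characteristic-function norm by at most a fixed constant — this last point again needs Lemma 2.2 to handle the small/large dichotomy uniformly, since the $L^{p(.)}$ quasi-norm is not homogeneous. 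Thus $\|\chi_{Q_j^\ast}\|_{p(.)}\sim\|\chi_{Q_j}\|_{p(.)}$ with uniform constants. Consequently
\[
\sum_{j}\left(\frac{\lambda_j\chi_{Q_j^\ast}}{\|\chi_{Q_j^\ast}\|_{p(.)}}\right)^{\underline p}
\lesssim \sum_{j}\left(\frac{\lambda_j\chi_{c(AQ_j)}}{\|\chi_{Q_j}\|_{p(.)}}\right)^{\underline p}.
\]

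Finally I would remove the dilation factor $c$ and the matrix $A$ from the right-hand side inside the outer $\|\cdot\|_{p(.)}$. For the dilation: there is a standard estimate (essentially a consequence of the boundedness of the Hardy--Littlewood maximal operator on $L^{p(.)}$, or directly of Lemma 2.2 comparing $\chi_{cQ}$ to a fixed multiple of $M\chi_Q$) giving $\bigl\|\{\sum_j(\lambda_j\chi_{cQ_j}/\|\chi_{Q_j}\|_{p(.)})^{\underline p}\}^{1/\underline p}\bigr\|_{p(.)}\lesssim \mathcal{A}(\{\lambda_j\},\{Q_j\},p(.))$; I would cite or prove that the same holds with $Q_j$ replaced by $AQ_j$ throughout, since $AQ_j$ is itself comparable to a cube and $\|\chi_{AQ_j}\|_{p(.)}\sim\|\chi_{Q_j}\|_{p(.)}$. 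Putting these together yields
\[
\mathcal{A}(\{\lambda_j\},\{Q_j^\ast\},p(.))\lesssim \mathcal{A}(\{\lambda_j\},\{Q_j\},p(.)),
\]
as claimed.

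I expect the main obstacle to be the very last reduction — controlling $\|\{\sum_j(\lambda_j\chi_{cAQ_j}/\|\chi_{Q_j}\|_{p(.)})^{\underline p}\}^{1/\underline p}\|_{p(.)}$ by $\mathcal{A}(\{\lambda_j\},\{Q_j\},p(.))$ — because passing from $\chi_{Q_j}$ to the characteristic function of an enlarged cube inside the nonhomogeneous quasi-norm $\|\cdot\|_{p(.)}$ is exactly where one needs a maximal-function / extrapolation type input rather than a soft argument, and one must check the constants are independent of the (infinitely many) cubes. The comparability $\|\chi_{Q_j^\ast}\|_{p(.)}\sim\|\chi_{Q_j}\|_{p(.)}$ is the place where the hypothesis $p(Ax)=p(x)$ is essential and must be used carefully in conjunction with both parts of Lemma 2.2.
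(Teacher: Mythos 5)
There is a genuine gap, and it sits exactly where you flagged ``the main obstacle.'' Your plan reduces everything to the estimate
\[
\left\Vert \Bigl\{ \sum_{j}\Bigl( \frac{\lambda_{j}\chi_{c(AQ_{j})}}{\Vert \chi_{Q_{j}}\Vert_{p(.)}}\Bigr)^{\underline{p}}\Bigr\}^{\frac{1}{\underline{p}}}\right\Vert_{p(.)}\lesssim \mathcal{A}\left( \{\lambda_{j}\},\{Q_{j}\},p(.)\right),
\]
which you propose to get from the standard dilation/maximal-function argument (the pointwise bound $\chi_{cQ}\lesssim (M\chi_{Q})^{\varepsilon}$ behind Lemma 4.8 of \cite{N-S}). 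That argument only works when the enlarged set is a bounded dilate of $Q_{j}$ \emph{about the same location}: one needs $\chi_{E_{j}}\lesssim (M\chi_{Q_{j}})^{\varepsilon}$ with constants uniform in $j$, which forces $E_{j}$ to sit inside a fixed dilate of $Q_{j}$. Here $Q_{j}^{\ast}$ and $AQ_{j}$ are centered at $Az_{j}$, not at $z_{j}$, and the ratio $|Az_{j}-z_{j}|/r_{j}$ is unbounded over sequences of cubes (take $A=2I$, $z_{j}=(j,0,\dots,0)$, $r_{j}=1$: then $M\chi_{Q_{j}}\approx j^{-n}$ on $Q_{j}^{\ast}$). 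So no power of $M\chi_{Q_{j}}$ dominates $\chi_{Q_{j}^{\ast}}$ uniformly, and the comparability $\Vert\chi_{AQ_{j}}\Vert_{p(.)}\sim\Vert\chi_{Q_{j}}\Vert_{p(.)}$ (which is correct, and correctly proved via Lemma 4 and $p(Az)=p(z)$) does nothing to relocate the supports inside the outer quasi-norm.

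The missing idea, which is how the paper proceeds, is to use the invariance $p(Ax)=p(x)$ \emph{globally in the outer norm}, not merely at the centers of the cubes: performing the change of variables $x\mapsto Ax$ in the modular $\int |g(x)/\lambda|^{p(x)}dx$ shows that $\Vert g(A\cdot)\Vert_{p(.)}\sim\Vert g\Vert_{p(.)}$ (up to constants depending on $|\det A|$ and $p_{\pm}$), whence
\[
\mathcal{A}\left( \{\lambda_{j}\},\{Q_{j}^{\ast}\},p(.)\right)=\mathcal{A}\left( \{\lambda_{j}\},\{A^{-1}Q_{j}^{\ast}\},p(.)\right).
\]
The sets $A^{-1}Q_{j}^{\ast}$ are now centered at $z_{j}$, contain $Q_{j}$, and satisfy $|A^{-1}Q_{j}^{\ast}|\leq c|Q_{j}|$; only at this point does the maximal-function argument of Lemma 4.8 in \cite{N-S} apply and finish the proof. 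Your write-up never performs this global change of variables, so the relocation step remains unjustified; the rest of your observations (the inclusion $Q_{j}\subset A^{-1}Q_{j}^{\ast}$ in disguise, the norm comparabilities) are correct but insufficient on their own.
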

\begin{proof}
Since $p(Ax)=p(x)$ for all $x \in \mathbb{R}^{n}$, a change of variable gives
$$\mathcal{A}\left( \left\{ \lambda_{j}\right\} _{j=1}^{\infty },\left\{
Q_{j}^{\ast}\right\} _{j=1}^{\infty },p(.)\right) =\left\Vert \left\{
\sum\limits_{j=1}^{\infty }\left( \frac{\lambda_{j}\chi _{Q_{j}^{\ast}}}{\left\Vert \chi
_{Q_{j}\ast}\right\Vert _{p(.)}}\right) ^{\underline{p}}\right\} ^{\frac{1}{%
\underline{p}}}\right\Vert _{p(.)}$$
$$= \left\Vert \left\{
\sum\limits_{j=1}^{\infty }\left( \frac{\lambda_{j}\chi _{A^{-1}Q_{j}^{\ast}}}{\left\Vert \chi
_{A^{-1}Q_{j}^{\ast}}\right\Vert _{p(.)}}\right) ^{\underline{p}}\right\} ^{\frac{1}{%
\underline{p}}}\right\Vert _{p(.)} = : \mathcal{A}\left( \left\{ \lambda_{j}\right\} _{j=1}^{\infty },\left\{
A^{-1}Q_{j}^{\ast}\right\} _{j=1}^{\infty },p(.)\right),$$
it is easy to check that $Q_j \subset A^{-1}Q_{j}^{\ast}$ for all $j$. Moreover, there exists a positive universal constant $c$ such that
$|A^{-1}Q_{j}^{\ast}| \leq c |Q_j|$ for all $j$. The same argument utilized in the proof of Lemma 4.8 in \cite{N-S} works in this case, so
$$\mathcal{A}\left( \left\{ \lambda_{j}\right\} _{j=1}^{\infty },\left\{
A^{-1}Q_{j}^{\ast}\right\} _{j=1}^{\infty },p(.)\right) \lesssim \mathcal{A}\left( \left\{ \lambda_{j}\right\} _{j=1}^{\infty },\left\{
Q_{j}\right\} _{j=1}^{\infty }, p(.)\right).$$
The proof is therefore concluded.
\end{proof}

Given $0<\alpha <n,$ we define the fractional maximal operator $M_{\alpha }$
by
\[
M_{\alpha }f(x)=\sup\limits_{Q}\frac{1}{\left\vert Q\right\vert ^{1-\frac{%
\alpha }{n}}}\int\limits_{Q}\left\vert f(y)\right\vert dy,
\]%
where $f$ is a locally integrable function and the supremum is taken over
all the cubes $Q$ which contain $x.$ In the case $\alpha =0,$ the fractional
maximal operator reduces to the Hardy-Littlewood maximal operator.

\begin{lemma}
Let $0\leq \alpha <n,$ let $p(.):\mathbb{R}^{n}\rightarrow \left( 1,\infty
\right) $ \ such that $p$ satisfies (\ref{log local}), (\ref{log fuera p
infinito}) and $1<p_{-}\leq p_{+}<\frac{n}{\alpha }.$ Then for $\theta \in
\left( 1,\infty \right) $ and $\frac{1}{q(.)}=\frac{1}{p(.)}-\frac{\alpha }{n%
}$ we have
\[
\left\Vert \left( \sum\limits_{j=1}^{\infty }\left( M_{\alpha }f_{j}\right)
^{\theta }\right) ^{\frac{1}{\theta }}\right\Vert _{q(.)}\lesssim \left\Vert
\left( \sum\limits_{j=1}^{\infty }\left\vert f_{j}\right\vert ^{\theta
}\right) ^{\frac{1}{\theta }}\right\Vert _{p(.)},
\]%
for all sequences of bounded measurable functions with compact support $\left\{ f_{j}\right\}
_{j=1}^{\infty }.$
\end{lemma}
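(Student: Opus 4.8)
The plan is to deduce this vector-valued inequality from classical weighted norm inequalities via Rubio de Francia extrapolation, since both sides involve only the $\ell^{\theta}$ quantities $\big(\sum_{j}|f_{j}|^{\theta}\big)^{1/\theta}$ and $\big(\sum_{j}(M_{\alpha}f_{j})^{\theta}\big)^{1/\theta}$, which is precisely the shape to which the off-diagonal extrapolation theorem on variable Lebesgue spaces applies.

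The first step is the scalar weighted bound. For fixed exponents $p_{0},q_{0}$ with $1<p_{0}\le q_{0}<\infty$ and $\frac{1}{q_{0}}=\frac{1}{p_{0}}-\frac{\alpha}{n}$, the fractional maximal operator satisfies $\Vert M_{\alpha}h\Vert_{L^{q_{0}}(w^{q_{0}})}\lesssim\Vert h\Vert_{L^{p_{0}}(w^{p_{0}})}$ for every weight $w$ in the Muckenhoupt--Wheeden class $A_{p_{0},q_{0}}$; when $\alpha=0$ this is the weighted Hardy--Littlewood maximal theorem for $w^{p_{0}}\in A_{p_{0}}$. Since $M_{\alpha}$ is a positive, sublinear operator, Rubio de Francia's extrapolation theorem upgrades this, for every $\theta\in(1,\infty)$, to the vector-valued weighted estimate
$$\Big\Vert\Big(\sum_{j}(M_{\alpha}f_{j})^{\theta}\Big)^{1/\theta}\Big\Vert_{L^{q_{0}}(w^{q_{0}})}\lesssim\Big\Vert\Big(\sum_{j}|f_{j}|^{\theta}\Big)^{1/\theta}\Big\Vert_{L^{p_{0}}(w^{p_{0}})},$$
valid for all $w\in A_{p_{0},q_{0}}$ with constant depending only on $n,\alpha,p_{0},\theta$ and the $A_{p_{0},q_{0}}$ characteristic of $w$.

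The second step is the off-diagonal extrapolation theorem on variable Lebesgue spaces (see \cite{C-F}, \cite{C-F-N}): if a pair $(G,F)$ of nonnegative measurable functions satisfies $\Vert G\Vert_{L^{q_{0}}(w^{q_{0}})}\lesssim\Vert F\Vert_{L^{p_{0}}(w^{p_{0}})}$ for all $w\in A_{p_{0},q_{0}}$, uniformly in the $A_{p_{0},q_{0}}$ characteristic, then $\Vert G\Vert_{q(.)}\lesssim\Vert F\Vert_{p(.)}$ for every exponent function $p(.)$ obeying (\ref{log local}), (\ref{log fuera p infinito}) with $1<p_{-}\le p_{+}<\frac{n}{\alpha}$ and $\frac{1}{q(.)}=\frac{1}{p(.)}-\frac{\alpha}{n}$. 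Taking $G=\big(\sum_{j}(M_{\alpha}f_{j})^{\theta}\big)^{1/\theta}$ and $F=\big(\sum_{j}|f_{j}|^{\theta}\big)^{1/\theta}$, which satisfy the hypothesis by the first step, yields exactly the claimed inequality; the assumption that the $f_{j}$ are bounded with compact support is used only to guarantee that every quantity in sight is finite, so that the extrapolation theorems apply without further qualification.

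I do not anticipate a genuine obstacle once the two quoted results are in hand. The points that need checking are that $M_{\alpha}$ really falls within the scope of the vector-valued Rubio de Francia mechanism -- it does, being positive and sublinear with the required scalar weighted bounds -- and that the exponent $q(.)$ defined by $\frac{1}{q(.)}=\frac{1}{p(.)}-\frac{\alpha}{n}$ is admissible for the variable-exponent extrapolation theorem, which is ensured by $p_{+}<n/\alpha$ together with (\ref{log local}) and (\ref{log fuera p infinito}). If one wishes to bypass extrapolation, an alternative is to combine the elementary pointwise estimate $M_{\alpha}h(x)\lesssim I_{\alpha}(|h|)(x)$ with vector-valued bounds for the Riesz potential $I_{\alpha}$ on variable Lebesgue spaces, but that route rests on the same circle of ideas.
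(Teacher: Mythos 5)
Your proposal is correct and follows essentially the same route as the paper: the scalar Muckenhoupt--Wheeden weighted bound for $M_{\alpha}$, upgraded by Rubio de Francia extrapolation to a vector-valued weighted estimate, and then transferred to the variable exponents $p(.)$, $q(.)$ by extrapolation into variable Lebesgue spaces (the paper cites Theorem 3.23 and Lemma 4.30 of \cite{C-M-P}, passing through $A_{1}$ weights via $w\in A_{1}\Rightarrow w^{1/q}\in A_{p,q}$, rather than extrapolating directly from the full $A_{p_{0},q_{0}}$ class, but this is only a cosmetic difference). The case $\alpha=0$ is handled the same way by the diagonal version of the extrapolation theorem.
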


\begin{proof}
For the case $0<\alpha <n,$ the boundedness of the fractional maximal
operator $M_{\alpha }$ from $L^{p}(w^{p})$ into $L^{q}(w^{q})$ for $1<p<%
\frac{n}{\alpha }$, $\frac{1}{q}=\frac{1}{p}-\frac{\alpha }{n}$ $\ $and for
all weights $w$ in the Muckenhoupt class $A_{p,q}$ (see \cite{M-W}) gives
the inequality 3.16, Theorem 3.23 in \cite{C-M-P}, for the pair $\left(
M_{\alpha }f,f\right) $, $f\in L^{p}\left( \mathbb{R}^{n}\right) .$ So for $%
1<\theta <\infty ,$%
\[
\left\Vert \left\{ \sum\limits_{j=1}^{\infty }\left( M_{\alpha }f_{j}\right)
^{\theta }\right\} ^{\frac{1}{\theta }}\right\Vert _{L^{q}\left(
w^{q}\right) }\lesssim \left\Vert \left( \sum\limits_{j=1}^{\infty
}\left\vert f_{j}\right\vert ^{\theta }\right) ^{\frac{1}{\theta }%
}\right\Vert _{L^{p}\left( w^{p}\right) }
\]%
for all weights $w$ in the Muckenhoupt class $A_{p,q}.$ Now $w\in A_{1}$
implies $w^{\frac{1}{q}}\in A_{p,q}$ so
\[
\left\Vert \left\{ \sum\limits_{j=1}^{\infty }\left( M_{\alpha }f_{j}\right)
^{\theta }\right\} ^{\frac{1}{\theta }}\right\Vert _{L^{q}\left( w\right)
}\lesssim \left\Vert \left( \sum\limits_{j=1}^{\infty }\left\vert
f_{j}\right\vert ^{\theta }\right) ^{\frac{1}{\theta }}\right\Vert
_{L^{p}\left( w^{\frac{p}{q}}\right) }
\]%
for all $w\in A_{1},$ thus the lemma follows , in this case, from Lemma
4.30 in \cite{C-M-P}. For the case $\alpha =0,$ Theorem 4.25 in \cite{C-M-P}
applies.
\end{proof}

\begin{lemma}
Let $0\leq \alpha <n,$ let $p(.):\mathbb{R}^{n}\rightarrow \left( 0,\infty
\right) $ such that $p$ satisfies (\ref{log local}), (\ref{log fuera p
infinito}) and $0<p_{-}\leq p_{+}<\frac{n}{\alpha }.$ If $\frac{1}{q(.)}=%
\frac{1}{p(.)}-\frac{\alpha }{n},$ then
\[
\mathcal{A}\left( \left\{ \lambda_{j}\right\} _{j=1}^{\infty },\left\{
Q_{j}\right\} _{j=1}^{\infty },q(.)\right) \lesssim \mathcal{A}\left(
\left\{ \lambda_{j}\right\} _{j=1}^{\infty },\left\{ Q_{j}\right\} _{j=1}^{\infty
},p(.)\right) ,
\]%
for all sequences of nonnegative numbers $\left\{ \lambda_{j}\right\}
_{j=1}^{\infty }$ and cubes $\left\{ Q_{j}\right\} _{j=1}^{\infty }.$
\end{lemma}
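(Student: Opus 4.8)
The plan is to push the $q(\cdot)$-building blocks $\lambda_j\chi_{Q_j}/\|\chi_{Q_j}\|_{q(\cdot)}$ into Riesz potentials of the $p(\cdot)$-building blocks $\lambda_j\chi_{Q_j}/\|\chi_{Q_j}\|_{p(\cdot)}$ via Lemma 2.2 in \cite{N-S}, and then to invoke the mapping properties of $I_{\alpha}$ (when $p_->1$) or a vector-valued fractional estimate (when $p_-\le 1$). We may assume $0<\alpha<n$, since for $\alpha=0$ one has $p(\cdot)=q(\cdot)$ and there is nothing to prove. From $\tfrac1{p(\cdot)}=\tfrac1{q(\cdot)}+\tfrac\alpha n$ and Lemma 2.2 in \cite{N-S} — which for $r\in\{p,q\}$ gives $\|\chi_Q\|_{r(\cdot)}\sim|Q|^{1/r(z)}$ when $l(Q)\le1$ ($z$ the centre of $Q$) and $\|\chi_Q\|_{r(\cdot)}\sim|Q|^{1/r_\infty}$ when $l(Q)\ge1$ — one obtains $\|\chi_{Q_j}\|_{p(\cdot)}\sim|Q_j|^{\alpha/n}\|\chi_{Q_j}\|_{q(\cdot)}$, uniformly in $j$. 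Since for $x\in Q$ we have $M_{\alpha}\chi_Q(x)\ge|Q|^{\alpha/n-1}\int_Q\chi_Q=|Q|^{\alpha/n}$ and likewise $I_{\alpha}\chi_Q(x)\gtrsim|Q|^{\alpha/n}$, this yields, setting $g_j:=\lambda_j\chi_{Q_j}/\|\chi_{Q_j}\|_{p(\cdot)}$, the pointwise bound
\[
\frac{\lambda_j\chi_{Q_j}(x)}{\|\chi_{Q_j}\|_{q(\cdot)}}\ \lesssim\ M_{\alpha}g_j(x)\ \lesssim\ I_{\alpha}g_j(x),\qquad x\in\mathbb{R}^{n},\ j\in\mathbb{N}.
\]

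Because $\tfrac1{q(\cdot)}\le\tfrac1{p(\cdot)}$ we have $q(x)\ge p(x)$, hence $q_-\ge p_-$ and $\underline q=\min(q_-,1)\ge\min(p_-,1)=\underline p$, so the $\ell^{\underline q}$-quasinorm is dominated by the $\ell^{\underline p}$-one. Together with the pointwise bound this gives
\[
\mathcal A\bigl(\{\lambda_j\},\{Q_j\},q(\cdot)\bigr)\ \le\ \Bigl\|\Bigl(\sum_j\bigl(\tfrac{\lambda_j\chi_{Q_j}}{\|\chi_{Q_j}\|_{q(\cdot)}}\bigr)^{\underline p}\Bigr)^{1/\underline p}\Bigr\|_{q(\cdot)}\ \lesssim\ \Bigl\|\Bigl(\sum_j(M_{\alpha}g_j)^{\underline p}\Bigr)^{1/\underline p}\Bigr\|_{q(\cdot)},
\]
so it remains to show $\bigl\|\bigl(\sum_j(M_{\alpha}g_j)^{\underline p}\bigr)^{1/\underline p}\bigr\|_{q(\cdot)}\lesssim\bigl\|\bigl(\sum_j g_j^{\underline p}\bigr)^{1/\underline p}\bigr\|_{p(\cdot)}=\mathcal A(\{\lambda_j\},\{Q_j\},p(\cdot))$.

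If $p_->1$, then $\underline p=\underline q=1$; using $M_{\alpha}g_j\lesssim I_{\alpha}g_j$, the linearity of $I_{\alpha}$ and the boundedness of $I_{\alpha}\colon L^{p(\cdot)}\to L^{q(\cdot)}$ (valid when $1<p_-\le p_+<n/\alpha$), one gets $\bigl\|\sum_j M_{\alpha}g_j\bigr\|_{q(\cdot)}\lesssim\bigl\|I_{\alpha}\bigl(\sum_jg_j\bigr)\bigr\|_{q(\cdot)}\lesssim\bigl\|\sum_jg_j\bigr\|_{p(\cdot)}$, which is the assertion. When $p_-\le1$ the power $\underline p=p_-$ is genuine; here I would proceed from the dyadic majorant $M_{\alpha}\chi_{Q_j}\lesssim|Q_j|^{\alpha/n}\sum_{k\ge0}2^{-k(n-\alpha)}\chi_{2^{k+1}Q_j}$, using $\underline p\le1$ to distribute the $\underline p$-th power over the $k$-sum, the triangle inequality in $L^{q(\cdot)/\underline p}$ (legitimate since $q_-/\underline p\ge1$), and Lemma 2.2 in \cite{N-S} once more to replace $\|\chi_{Q_j}\|_{q(\cdot)}$ by $\|\chi_{Q_j}\|_{p(\cdot)}$ and the dilates $2^{k+1}Q_j$ by $Q_j$, so that the whole estimate reduces to a geometric sum in $k$ governed by $2^{-k(n-\alpha)\underline p}$ against a polynomial-in-$k$ dilation factor furnished by Lemma 2.2 in \cite{N-S}.

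The step I expect to demand the most care is precisely this last one for $p_-\le1$: the gain of integrability from $p(\cdot)$ to $q(\cdot)$ must be realised below the exponent $1$, where $M_{\alpha}$ and $I_{\alpha}$ no longer map $L^{p(\cdot)/\underline p}$ into $L^{q(\cdot)/\underline p}$ (the source exponent then being exactly $1$) and the vector-valued estimate for $M_{\alpha}$ above cannot be used verbatim, since its vector index must exceed $1$. The way around this is to exploit that the $g_j$ are normalised indicators of cubes: one compares $\bigl(\sum_j(M_{\alpha}g_j)^{\underline p}\bigr)^{1/\underline p}$ pointwise with $\bigl(M_{\beta}\bigl[\sum_j g_j^{\underline p}\bigr]\bigr)^{\sigma/\underline p}$ for $\beta=\alpha\underline p/\sigma$ and a parameter $\sigma>1$, using the elementary inequality $(M_{\alpha}\chi_Q)^{\underline p/\sigma}\lesssim M_{\beta}\chi_Q$ (which holds precisely because $\sigma\ge\underline p$), so that the outer fractional maximal operator $M_{\beta}$ now acts between $L^{p(\cdot)\sigma/\underline p}$ and $L^{q(\cdot)\sigma/\underline p}$ — spaces with lower exponent $\sigma>1$ — after which $\|(M_{\beta}[\,\cdot\,])^{\sigma/\underline p}\|_{q(\cdot)}=\|M_{\beta}[\,\cdot\,]\|_{q(\cdot)\sigma/\underline p}^{\sigma/\underline p}$ is controlled and unwinds to $\mathcal A(\{\lambda_j\},\{Q_j\},p(\cdot))$. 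Making that pointwise comparison rigorous, together with the passage from the dyadic tails to a single maximal operator and the balance between $2^{-k(n-\alpha)\underline p}$ and the growth of $\|\chi_{2^{k+1}Q}\|_{r(\cdot)}$, is the technical core of the proof.
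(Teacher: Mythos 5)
Your opening reductions agree with the paper: passing from the $\ell^{\underline{q}}$-sum to the $\ell^{\underline{p}}$-sum and using Lemma 2.2 of \cite{N-S} to get $\Vert \chi_{Q_j}\Vert_{q(\cdot)}\sim |Q_j|^{-\alpha/n}\Vert \chi_{Q_j}\Vert_{p(\cdot)}$ are exactly the paper's first two steps, and your treatment of the case $p_{-}>1$ via $M_{\alpha}g\lesssim I_{\alpha}g$ and the $L^{p(\cdot)}\rightarrow L^{q(\cdot)}$ boundedness of $I_{\alpha}$ from \cite{C-F} is correct. The genuine gap is in the case $p_{-}\leq 1$, which is the case that matters for Hardy spaces. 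Replacing $\lambda_j\chi_{Q_j}/\Vert\chi_{Q_j}\Vert_{q(\cdot)}$ by $M_{\alpha}g_j$ and then aiming at $\bigl\Vert\bigl(\sum_j(M_{\alpha}g_j)^{\underline{p}}\bigr)^{1/\underline{p}}\bigr\Vert_{q(\cdot)}\lesssim\bigl\Vert\bigl(\sum_j g_j^{\underline{p}}\bigr)^{1/\underline{p}}\bigr\Vert_{p(\cdot)}$ is a reduction to a false statement: already for a single cube and constant exponent $p<1$ one has $M_{\alpha}\chi_{Q}(x)\sim |Q|\,|x-z|^{\alpha-n}$ for large $|x|$, while $q=np/(n-\alpha p)\leq n/(n-\alpha)$, so $(n-\alpha)q\leq n$ and $M_{\alpha}\chi_{Q}\notin L^{q(\cdot)}$; the left-hand side is infinite and the right-hand side equals $1$. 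The point is that $\chi_{Q_j}$ is supported in $Q_j$, and majorizing it by $M_{\alpha}g_j$, which has a fat tail off $Q_j$, discards exactly the feature that makes the lemma true. Consistently with this, the ``elementary inequality'' $(M_{\alpha}\chi_{Q})^{\underline{p}/\sigma}\lesssim M_{\alpha\underline{p}/\sigma}\chi_{Q}$ on which your repair rests fails: at distance $t\gg l(Q)$ from the cube the left side is comparable to $\bigl(l(Q)^{n}t^{\alpha-n}\bigr)^{\underline{p}/\sigma}$ and the right side to $l(Q)^{n}t^{\alpha\underline{p}/\sigma-n}$, so their ratio grows like $(t/l(Q))^{n(1-\underline{p}/\sigma)}$, which tends to infinity because $\underline{p}/\sigma<1$.

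The paper's proof never lets the tail appear. Since $\chi_{Q_j}$ vanishes off $Q_j$, the only pointwise inequality needed is the on-cube bound $|Q_j|^{\alpha/n}\chi_{Q_j}(x)\leq\bigl(M_{\alpha\underline{p}/2}\chi_{Q_j}(x)\bigr)^{2/\underline{p}}$, which is immediate (take $R=Q_j$ in the supremum when $x\in Q_j$; off $Q_j$ the left side is zero). One then rescales, $\bigl\Vert h^{1/\underline{p}}\bigr\Vert_{q(\cdot)}=\bigl\Vert h^{1/2}\bigr\Vert_{2q(\cdot)/\underline{p}}^{2/\underline{p}}$, and applies the vector-valued fractional maximal inequality (Lemma 6) with $\ell^{2}$ index and fractional order $\alpha\underline{p}/2$ between $L^{2p(\cdot)/\underline{p}}$ and $L^{2q(\cdot)/\underline{p}}$, whose lower exponents are at least $2>1$, so the hypotheses $\theta>1$ and $p_{-}>1$ of that lemma are met. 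This handles all $0<p_{-}\leq p_{+}<n/\alpha$ uniformly; your $p_{-}\leq 1$ case should be rewritten along these lines rather than through a global pointwise domination of $M_{\alpha}g_j$.
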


\begin{proof}
Since $l^{\underline{p}}\hookrightarrow l^{\underline{q}},$ we have
\begin{eqnarray*}
\mathcal{A}\left( \left\{ \lambda_{j}\right\} _{j=1}^{\infty },\left\{
Q_{j}\right\} _{j=1}^{\infty },q(.)\right) &=&\left\Vert \left\{
\sum\limits_{j=1}^{\infty }\left( \frac{\lambda_{j}\chi _{Q_{j}}}{\left\Vert \chi
_{Q_{j}}\right\Vert _{q(.)}}\right) ^{\underline{q}}\right\} ^{\frac{1}{%
\underline{q}}}\right\Vert _{q(.)} \\
&\lesssim &\left\Vert \left\{ \sum\limits_{j=1}^{\infty }\left( \frac{%
\lambda_{j}\chi _{Q_{j}}}{\left\Vert \chi _{Q_{j}}\right\Vert _{q(.)}}\right) ^{%
\underline{p}}\right\} ^{\frac{1}{\underline{p}}}\right\Vert _{q(.)}.
\end{eqnarray*}%
Now from Lemma 4 we obtain $\left\Vert \chi _{Q_{j}}\right\Vert _{q(.)}\sim
\left\Vert \chi _{Q_{j}}\right\Vert _{p(.)}\left\vert Q_{j}\right\vert ^{-%
\frac{\alpha }{n}}.$ Moreover a simple computation gives
\[
\left\vert Q_{j}\right\vert ^{\frac{\alpha }{n}}\chi _{Q_{j}}\left( x\right)
\leq M_{\frac{\alpha \underline{p}}{2}}(\chi _{Q_{j}})^{\frac{2}{\underline{p%
}}}\left( x\right) ,
\]%
so
\[
\lesssim \left\Vert \left\{ \sum\limits_{j=1}^{\infty }\left( \frac{%
\lambda_{j}\chi _{Q_{j}}\left\vert Q_{j}\right\vert ^{\frac{\alpha }{n}}}{%
\left\Vert \chi _{Q_{j}}\right\Vert _{p(.)}}\right) ^{\underline{p}}\right\}
^{\frac{1}{\underline{p}}}\right\Vert _{q(.)}\lesssim \left\Vert \left\{
\sum\limits_{j=1}^{\infty }\left( \frac{\lambda_{j}M_{\frac{\alpha \underline{p}}{2%
}}(\chi _{Q_{j}})^{\frac{2}{\underline{p}}}}{\left\Vert \chi
_{Q_{j}}\right\Vert _{p(.)}}\right) ^{\underline{p}}\right\} ^{\frac{1}{%
\underline{p}}}\right\Vert _{q(.)}
\]%
\[
=\left\Vert \left\{ \sum\limits_{j=1}^{\infty }\frac{\lambda_{j}^{\underline{p}}M_{%
\frac{\alpha \underline{p}}{2}}(\chi _{Q_{j}})^{2}}{\left\Vert \chi
_{Q_{j}}\right\Vert _{p(.)}^{\underline{p}}}\right\} ^{\frac{1}{\underline{p}%
}}\right\Vert _{q(.)}=\left\Vert \left\{ \sum\limits_{j=1}^{\infty }\frac{%
\lambda_{j}^{\underline{p}}M_{\frac{\alpha \underline{p}}{2}}(\chi _{Q_{j}})^{2}}{%
\left\Vert \chi _{Q_{j}}\right\Vert _{p(.)}^{\underline{p}}}\right\} ^{\frac{%
1}{2}}\right\Vert _{\frac{2q(.)}{\underline{p}}}^{\frac{2}{\underline{p}}}
\]
\begin{eqnarray*}
&\lesssim &\left\Vert \left\{ \sum\limits_{j=1}^{\infty }\frac{\lambda_{j}^{%
\underline{p}}\chi _{Q_{j}}}{\left\Vert \chi _{Q_{j}}\right\Vert _{p(.)}^{%
\underline{p}}}\right\} ^{\frac{1}{2}}\right\Vert _{\frac{2p(.)}{\underline{p%
}}}^{\frac{2}{\underline{p}}}=\left\Vert \left\{ \sum\limits_{j=1}^{\infty
}\left( \frac{\lambda_{j}\chi _{Q_{j}}}{\left\Vert \chi _{Q_{j}}\right\Vert _{p(.)}%
}\right) ^{\underline{p}}\right\} ^{\frac{1}{\underline{p}}}\right\Vert
_{p(.)} \\
&=&\mathcal{A}\left( \left\{ \lambda_{j}\right\} _{j=1}^{\infty },\left\{
Q_{j}\right\} _{j=1}^{\infty },p(.)\right) ,
\end{eqnarray*}%
where the third inequality follows from Lemma 6.
\end{proof}

\section{The main result}

\begin{theorem}
Let $m>1,$ let $A_{1},...,$\thinspace $A_{m}$ be $n\times n$ invertible
matrices such that $A_{i}-A_{j}$ is invertible for $i\neq j$, let $0\leq
\alpha <n$ and let $T_{\alpha }$ be the integral operator defined by (\ref{T}%
). Suppose $p(.):\mathbb{R}^{n}\rightarrow \left( 0,\infty \right) $
satisfies (\ref{log local}), (\ref{log fuera p infinito}), $0<p_{-}\leq
p_{+}<\frac{n}{\alpha }$ and $p(A_{i}x)\equiv p(x),$ $1\leq i\leq m.$ If
$\frac{1}{q(.)}=\frac{1}{p(.)}-\frac{\alpha }{n}$ then $T_{\alpha }$ can be extended to an $H^{p(.)}\left( \mathbb{R}^{n}\right) - L^{q(.)}\left(
\mathbb{R}^{n}\right)$ bounded operator.
\end{theorem}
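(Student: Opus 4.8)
\noindent The plan is to pass to the atomic decomposition of $H^{p(.)}$. Fix a finite exponent $p_{0}$ with $p_{+}<p_{0}<\infty$ (and $p_{0}<n/\alpha$ if $\alpha>0$, so that $q_{0}$ defined by $\frac{1}{q_{0}}=\frac{1}{p_{0}}-\frac{\alpha}{n}$ is finite), and fix $d=d_{p(.)}$. By Theorem 4.6 in \cite{N-S} it suffices to prove that
\[
\Big\Vert\sum_{j}\lambda_{j}\,|T_{\alpha}a_{j}|\Big\Vert_{q(.)}\lesssim\mathcal{A}\big(\{\lambda_{j}\}_{j},\{Q_{j}\}_{j},p(.)\big)
\]
for every sequence of nonnegative numbers $\{\lambda_{j}\}$ and every family of $(p(.),p_{0},d)$-atoms $a_{j}$ with $\mathrm{supp}\,a_{j}\subset Q_{j}=Q(z_{j},r_{j})$; the extension of $T_{\alpha}$ to all of $H^{p(.)}$ then follows by a standard density argument. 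Note that $q(.)$ inherits (\ref{log local}) and (\ref{log fuera p infinito}) from $p(.)$ and has $q_{+}<\infty$. For $1\le i\le m$ set $Q_{j}^{(i)}=Q(A_{i}z_{j},c\,r_{j})$, with $c$ a constant depending only on $n$ and the matrices, chosen large enough that $A_{i}Q_{j}\subset Q_{j}^{(i)}$ and $|x-A_{i}y|\sim|x-A_{i}z_{j}|$ whenever $y\in Q_{j}$ and $x\notin Q_{j}^{(i)}$, and put $\mathcal{O}_{j}=\bigcup_{i=1}^{m}Q_{j}^{(i)}$. Since $\Vert\cdot\Vert_{q(.)}^{\underline{q}}$ is subadditive, it is enough to estimate $\sum_{j}\lambda_{j}|T_{\alpha}a_{j}|\chi_{\mathcal{O}_{j}^{c}}$ and, for each $i$, $\sum_{j}\lambda_{j}|T_{\alpha}a_{j}|\chi_{Q_{j}^{(i)}}$.

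For the local piece I would use the size estimate $|k(x,y)|\le\sum_{i=1}^{m}|x-A_{i}y|^{-(n-\alpha)}$: a change of variables $z=A_{i}y$ together with the Hardy--Littlewood--Sobolev inequality gives, when $\alpha>0$, $\Vert T_{\alpha}a_{j}\Vert_{q_{0}}\lesssim\Vert a_{j}\Vert_{p_{0}}\le|Q_{j}|^{1/p_{0}}/\Vert\chi_{Q_{j}}\Vert_{p(.)}$, and the same bound holds for $\alpha=0$ by the $L^{p_{0}}$-boundedness of $T_{0}$, which is available because $A_{i}-A_{i'}$ is invertible for $i\ne i'$. Since $p(A_{i}z_{j})=p(z_{j})$ and $|Q_{j}^{(i)}|\sim|Q_{j}|$, Lemma 4 yields $\Vert\chi_{Q_{j}^{(i)}}\Vert_{q(.)}\sim\Vert\chi_{Q_{j}}\Vert_{p(.)}\,|Q_{j}|^{-\alpha/n}$, whence (up to a constant depending only on $n$ and the $A_{i}$) the function $(T_{\alpha}a_{j})\chi_{Q_{j}^{(i)}}$ is supported in $Q_{j}^{(i)}$ and has $L^{q_{0}}$-norm at most $|Q_{j}^{(i)}|^{1/q_{0}}/\Vert\chi_{Q_{j}^{(i)}}\Vert_{q(.)}$. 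Then the block-summation argument from the proof of Theorem 4.6 in \cite{N-S} gives $\Vert\sum_{j}\lambda_{j}(T_{\alpha}a_{j})\chi_{Q_{j}^{(i)}}\Vert_{q(.)}\lesssim\mathcal{A}(\{\lambda_{j}\}_{j},\{Q_{j}^{(i)}\}_{j},q(.))$, and Lemma 5 (applied with the matrix $A_{i}$ and the exponent $q(.)$, which satisfies $q(A_{i}x)=q(x)$) followed by Lemma 7 bounds this by $\mathcal{A}(\{\lambda_{j}\}_{j},\{Q_{j}\}_{j},p(.))$; finally sum over the finitely many $i$.

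For the tail piece I would exploit the vanishing moments. For $x\in\mathcal{O}_{j}^{c}$ write $T_{\alpha}a_{j}(x)=\int_{Q_{j}}(k(x,y)-P(x,y))\,a_{j}(y)\,dy$, where $P(x,\cdot)$ is the degree-$d$ Taylor polynomial of $k(x,\cdot)$ about $z_{j}$; the Leibniz rule and the elementary bound $|\partial_{y}^{\gamma}|x-A_{i}y|^{-\alpha_{i}}|\lesssim|x-A_{i}y|^{-\alpha_{i}-|\gamma|}$ bound $|\partial_{y}^{\beta}k(x,y)|$ by a finite sum of products $\prod_{i}|x-A_{i}y|^{-\alpha_{i}-|\gamma^{(i)}|}$ with $\sum_{i}|\gamma^{(i)}|=|\beta|$, and since $|x-A_{i}y|\sim|x-A_{i}z_{j}|$ on $Q_{j}$ one obtains, using $\Vert a_{j}\Vert_{1}\le|Q_{j}|/\Vert\chi_{Q_{j}}\Vert_{p(.)}$, $\sum_{i}\alpha_{i}=n-\alpha$ and Lemma 4,
\[
|T_{\alpha}a_{j}(x)|\,\chi_{\mathcal{O}_{j}^{c}}(x)\lesssim\frac{|Q_{j}|^{\alpha/n}}{\Vert\chi_{Q_{j}}\Vert_{p(.)}}\Big(\frac{r_{j}}{\min_{i}|x-A_{i}z_{j}|}\Big)^{n-\alpha+d+1}\lesssim\sum_{i=1}^{m}\frac{M_{\alpha'}(\chi_{Q_{j}^{(i)}})(x)^{t}}{|Q_{j}^{(i)}|^{\alpha't/n}\,\Vert\chi_{Q_{j}^{(i)}}\Vert_{q(.)}},
\]
where $t=\tfrac{n+d+1}{n}$ and $\alpha'=\alpha/t$ are chosen so that $(n-\alpha')t=n-\alpha+d+1$ and $\alpha't=\alpha$, and the last inequality also uses the elementary estimate $M_{\alpha'}(\chi_{R})(x)\sim|R|^{\alpha'/n}(1+\mathrm{dist}(x,R)|R|^{-1/n})^{\alpha'-n}$. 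Then, mimicking the proof of Lemma 7 --- via the identities $\Vert g\Vert_{r(.)}=\Vert|g|^{s}\Vert_{r(.)/s}^{1/s}$, the inclusion $\ell^{\underline{q}}\hookrightarrow\ell^{1}$, and the arithmetic facts that $tp_{-}=\tfrac{(n+d+1)p_{-}}{n}>1$ precisely because $d\ge d_{p(.)}$, that $t\underline{q}>1$ (since $q_{-}\ge p_{-}$), and that $tp_{+}<n/\alpha'$ because $p_{+}<n/\alpha$ --- I would reduce, with vector index $\theta=t\underline{q}$ and fractional parameter $\alpha'$, to Lemma 6 applied to the exponents $tp(.)$ and $tq(.)$; this yields, for each $i$, $\Vert\sum_{j}\tfrac{\lambda_{j}M_{\alpha'}(\chi_{Q_{j}^{(i)}})^{t}}{|Q_{j}^{(i)}|^{\alpha't/n}\Vert\chi_{Q_{j}^{(i)}}\Vert_{q(.)}}\Vert_{q(.)}\lesssim\big\Vert\big(\sum_{j}(\lambda_{j}\chi_{Q_{j}^{(i)}}/\Vert\chi_{Q_{j}^{(i)}}\Vert_{p(.)})^{\underline{q}}\big)^{1/\underline{q}}\big\Vert_{p(.)}\le\mathcal{A}(\{\lambda_{j}\}_{j},\{Q_{j}^{(i)}\}_{j},p(.))$, the last step because $\underline{p}\le\underline{q}$. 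A further application of Lemma 5 reduces each such term to $\mathcal{A}(\{\lambda_{j}\}_{j},\{Q_{j}\}_{j},p(.))$. Combining the local and tail estimates with $\mathcal{A}(\{\lambda_{j}\}_{j},\{Q_{j}\}_{j},p(.))\lesssim\Vert f\Vert_{H^{p(.)}}$ completes the proof.

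The step I expect to be the main obstacle is the tail estimate: one must bound the $y$-derivatives of the \emph{product} kernel $k$ --- this is where the invertibility of the $A_{i}$ and of the differences $A_{i}-A_{i'}$ is used, keeping the $m$ singularities $y=A_{i}^{-1}x$ apart --- so as to extract the decay $(r_{j}/\min_{i}|x-A_{i}z_{j}|)^{n-\alpha+d+1}$, and then one must calibrate the auxiliary exponent $t=(n+d+1)/n$ and fractional order $\alpha'=\alpha/t$ so that the power-trick reduction to the vector-valued fractional maximal inequality (Lemma 6) is internally consistent, the choice $d=d_{p(.)}$ being exactly what forces $tp_{-}>1$. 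A secondary point is that, unlike the classical case $m=1$, $T_{\alpha}a_{j}$ need not be bounded near the common zero of the singularities, so the local part cannot be handled by the quasi-triangle inequality alone but relies instead on the block-summation estimate from \cite{N-S}, while the endpoint $\alpha=0$ requires the (known) $L^{p}$-boundedness of $T_{0}$.
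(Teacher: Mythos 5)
Your proposal is correct and follows essentially the same route as the paper: atomic decomposition, a local part on the dilated cubes $Q(A_{i}z_{j},cr_{j})$ handled by the $L^{p_{0}}$--$L^{q_{0}}$ boundedness of $T_{\alpha}$ together with the block-summation estimate from \cite{N-S} and Lemmas 5 and 7, and a tail part handled by the degree-$d$ Taylor remainder and the pointwise domination by $M_{\alpha n/(n+d+1)}(\chi_{Q_{j}})^{(n+d+1)/n}$ fed into the vector-valued Lemma 6. The only (cosmetic) divergences are that you work with $M_{\alpha'}(\chi_{Q_{j}^{(i)}})(x)$ and invoke Lemma 5 at the end where the paper uses $M_{\alpha'}(\chi_{Q_{j}})(A_{k}^{-1}x)$ and the invariance $p(A_{k}x)=p(x)$ directly, and that the paper additionally calibrates $p_{0}$ near $n/\alpha$ so that $\delta=1/q_{0}$ meets the hypothesis of Lemma 4.11 in \cite{N-S}.
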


\begin{proof}
Let $\max \{ 1 ,  p_{+} \} < p_0 < \frac{n}{\alpha}$. Given $f \in H^{p(.)} \cap L^{p_0}(\mathbb{R}^{n}),$ from Theorem 4.6 in \cite{N-S}
we have that there exist a sequence of nonnegative numbers $\{\lambda_j\}_{j=1}^{\infty}$, a sequence of cubes $Q_j = Q(z_j,r_j)$
centered at $z_j$ with side length $r_j$ and $(p(.), p_0, d)$ atoms $a_j$ supported on $Q_j$, satisfying
$$\mathcal{A}\left( \{\lambda_j\}_{j=1}^{\infty}, \{Q_j\}_{j=1}^{\infty}, p(.) \right) \leq c \|f \|_{H^{p(.)}},$$
such that $f$ can be decomposed as $f = \sum_{j \in \mathbb{N}} \lambda_j a_j$, where the convergence is in $H^{p(.)}$ and in $L^{p_0}$ (for the converge in $L^{p_0}$ see Theorem 5 in \cite{rocha}, the same argument works for $f \in H^{p(.)} \cap L^{p_0}(\mathbb{R}^{n})$).
We will study the behavior of $T_{\alpha}$ on atoms. Define $D=\max\limits_{1\leq i\leq m,\left\Vert
x\right\Vert \leq 1}\left\{ \left\Vert A_{i}(x)\right\Vert \right\} .$
Fix $j \in \mathbb{N}$, let $a_j$ be an $(p(.), p_0, d)$-atom supported on a cube $Q_j = Q(z_j, r_j)$, for each $1 \leq i \leq m$ let $Q_{ji}^{\ast} = Q(A_i z_j, 4Dr_j)$. Proposition 1 in \cite{R-U} gives that $T_{\alpha }$ is bounded from $L^{p_{0}}\left( \mathbb{R}^{n}\right) $ into
$L^{q_{0}}\left( \mathbb{R}^{n}\right)$ for $\frac{1}{q_{0}}=\frac{1}{p_{0}}-\frac{\alpha }{n},$ thus
\[
\left\Vert T_{\alpha} a_j \right\Vert _{L^{q_{0}}(Q_{j i}^{\ast })}\lesssim \left\Vert
a\right\Vert _{p_{0}}\lesssim \frac{\left\vert Q_j \right\vert ^{\frac{1}{p_{0}}
}}{\left\Vert \chi _{Q_j }\right\Vert _{p(.)}}\lesssim \frac{\left\vert
Q_{ji}^{\ast} \right\vert ^{\frac{1}{q_{0}}}}{\left\Vert \chi _{Q_{j i}^{\ast } }\right\Vert _{q(.)}},
\]
where the last inequality follows from lemma 4. So if $\left\Vert
T_{\alpha}a_j \right\Vert _{L^{q_{0}}(Q_{ji}^{\ast })}\neq 0$ we get
\begin{equation}
1\lesssim \frac{\left\vert Q_{ji}^{\ast} \right\vert ^{\frac{1}{q_{0}}}}{\left\Vert
T_{\alpha} a_j \right\Vert _{L^{q_{0}}(Q_{ji}^{\ast })}\left\Vert \chi _{Q_{j i}^{\ast }}\right\Vert
_{q(.)}}.  \label{1 menor o igual}
\end{equation}
We denote $k(x,y)=\left\vert x-A_{1}y\right\vert ^{-\alpha
_{1}}...\left\vert x-A_{m}y\right\vert ^{-\alpha _{m}}.$ In view of the moment condition of $a_j$ we have
\begin{equation}
T_{\alpha} a_j(x)=\int\limits_{Q_j}k(x,y)a_j(y)dy=\int\limits_{Q_j}\left( k(x,y)-q_{d, j}\left(
x,y\right) \right) a_j(y)dy,
\end{equation}%
\newline
where $q_{d, j}$ is the degree $d$ Taylor polynomial of the function $
y\rightarrow k(x,y)$ expanded around $z_j$. By the standard estimate of the
remainder term of the taylor expansion, there exists $\xi $ between $y$ and
$z_j$ such that
\[
\left\vert k(x,y)-q_{d, j}\left( x,y\right) \right\vert \lesssim \left\vert
y-z_j \right\vert ^{d+1}\sum\limits_{k_{1}+...+k_{n}=d+1}\left\vert \frac{%
\partial ^{d+1}}{\partial y_{1}^{k_{1}}...\partial y_{n}^{k_{n}}}k(x,\xi
)\right\vert
\]
\[
\lesssim \left\vert y-z_j \right\vert ^{d+1}\left(
\prod\limits_{i=1}^{m}\left\vert x-A_{i}\xi \right\vert ^{-\alpha
_{i}}\right) \left( \sum\limits_{l=1}^{m}\left\vert x-A_{l}\xi \right\vert
^{-1}\right) ^{d+1}.
\]
Now we decompose $\mathbb{R}^{n} = \bigcup_{i=1}^{m} Q_{ji}^{\ast} \cup R_j$, where $R_j = \left( \bigcup_{i=1}^{m} Q_{ji}^{\ast} \right)^{c}$, at the same time we decompose $R_j = \bigcup_{k=1}^{m} R_{jk}$ with
$$R_{jk} = \{ x \in R_{j} : |x - A_k z_j| \leq |x - A_i z_j| \,\, for \,\, all \,\, i \neq k \}.$$
If $x \in R_{j}$ then $|x - A_i z_j| \geq 2Dr_j$, since $\xi \in Q_j$ it follows that $|A_i z_j - A_i \xi | \leq D r_j \leq \frac{1}{2} |x - A_i z_j|$ so
$$|x - A_i \xi| = |x - A_i z_j + A_i z_j - A_i \xi| \geq |x - A_i z_j| - |A_i z_j - A_i \xi| \geq \frac{1}{2} |x - A_i z_j|.$$
If $x \in R_j$, then $x \in R_{jk}$ for some $k$ and since $\alpha_{1}+...+\alpha_{m} = n - \alpha$ we obtain
$$
\left\vert k(x,y)-q_{d, j}\left( x,y\right) \right\vert  \lesssim
\left\vert y-z_j \right\vert ^{d+1}\left( \prod\limits_{i=1}^{m}\left\vert
x-A_{i}z_j \right\vert ^{-\alpha _{i}}\right) \left(
\sum\limits_{l=1}^{m}\left\vert x-A_{l}z_j \right\vert ^{-1}\right) ^{d+1}
$$
$$
\lesssim r_{j}^{d+1}\left\vert x-A_{k}z_j \right\vert ^{-n+\alpha -d-1},
$$
this inequality allow us to conclude that
\begin{eqnarray*}
\left\vert T_{\alpha}a_j(x)\right\vert  &\lesssim &\left\Vert a_j \right\Vert
_{1}r_{j}^{d+1}\left\vert x-A_{k}z_j\right\vert ^{-n+\alpha -d-1} \\
&\lesssim &\left\vert Q_j \right\vert ^{1-\frac{1}{p_{0}}}\left\Vert
a_j \right\Vert _{p_{0}}r_{j}^{d+1}\left\vert x-A_{k}z_j\right\vert ^{-n+\alpha -d-1}
\\
&\lesssim &\frac{r_{j}^{n+d+1}}{\left\Vert \chi _{Q_j}\right\Vert _{p(.)}}%
\left\vert x-A_{k}z_j \right\vert ^{-n+\alpha -d-1} \\
&\lesssim &\frac{\left( M_{\frac{\alpha n}{n+d+1}}\left( \chi _{Q_j}\right)
(A_{k}^{-1}x)\right) ^{\frac{n+d+1}{n}}}{\left\Vert \chi _{Q_j}\right\Vert
_{p(.)}}, \,\,\,\,\, if \,\, x \in R_{jk}.
\end{eqnarray*}
Since $f = \sum_{j=1}^{\infty} \lambda_j a_j$ in $L^{p_0}$ and $T_{\alpha}$ is an $L^{p_0}-L^{\frac{n p_0}{n- \alpha p_0}}$ bounded operator, we have that $|T_{\alpha} f (x)| \leq \sum_{j=1}^{\infty} \lambda_j |T_{\alpha} a_j(x)|$. So
$$
|T_{\alpha} f (x)| \leq \sum_{j=1}^{\infty} \lambda_j |T_{\alpha} a_j(x)|
= \sum_{j=1}^{\infty} \left(\chi_{\bigcup_{i=1}^{m} Q_{ji}^{\ast}}(x) + \chi_{R_j}(x) \right) \lambda_j |T_{\alpha} a_j(x)|$$
$$\lesssim \sum_{j=1}^{\infty} \sum_{i=1}^{m} \chi_{Q_{ji}^{\ast}}(x) \lambda_j |T_{\alpha} a_j(x)| + \sum_{j=1}^{\infty} \sum_{k=1}^{m} \chi_{R_{jk}}(x) \lambda_j |T_{\alpha} a_j(x)|$$
$$\lesssim \sum_{j=1}^{\infty} \sum_{i=1}^{m} \chi_{Q_{ji}^{\ast}}(x) \lambda_j |T_{\alpha} a_j(x)|$$
$$+ \sum_{j=1}^{\infty} \sum_{k=1}^{m} \chi_{R_{jk}}(x) \lambda_j \frac{\left( M_{\frac{\alpha n}{n+d+1}}\left( \chi _{Q_j}\right)
(A_{k}^{-1}x)\right) ^{\frac{n+d+1}{n}}}{\left\Vert \chi _{Q_j}\right\Vert
_{p(.)}}= I + II$$
To study $I,$ if $\left\Vert T_{\alpha}a_{j}\right\Vert
_{L^{q_{0}}(Q_{ji}^{\ast })}\neq 0,$ we apply (\ref{1 menor o igual}) to
obtain, since $\underline{q}\leq 1,$
\[
\left\Vert I\right\Vert _{q(.)}\lesssim \sum\limits_{i=1}^{m}\left\Vert
\sum\limits_{j=1}^{\infty}\lambda_{j}\chi _{Q_{ji}^{\ast }}\left\vert T_{\alpha}
a_{j} \right\vert \right\Vert _{q(.)}\lesssim
\sum\limits_{i=1}^{m}\left\Vert \sum\limits_{j=1}^{\infty}\frac{\lambda_{j}\chi
_{Q_{ji}^{\ast }}\left\vert T_{\alpha} a_{j} \right\vert \left\vert
Q_{ji}^{\ast }\right\vert ^{\frac{1}{q_{0}}}}{\left\Vert
T_{\alpha}a_{j}\right\Vert _{L^{q_{0}}(Q_{ji}^{\ast })}\left\Vert \chi
_{Q_{ji}^{\ast }}\right\Vert _{q(.)}}\right\Vert _{q(.)}
\]
\[
\lesssim \sum\limits_{i=1}^{m}\left\Vert \left\{ \sum\limits_{j=1}^{\infty}\left(
\frac{\lambda_{j}\chi _{Q_{ji}^{\ast }}\left\vert T_{\alpha} a_{j}
\right\vert \left\vert Q_{ji}^{\ast }\right\vert ^{\frac{1}{q_{0}}}}{%
\left\Vert T_{\alpha}a_{j} \right\Vert _{L^{q_{0}}(Q_{ji}^{\ast })}\left\Vert \chi
_{Q_{ji}^{\ast }}\right\Vert _{q(.)}}\right) ^{\underline{q}}\right\} ^{%
\frac{1}{\underline{q}}}\right\Vert _{q(.)},
\]%
now we take $p_{0}$ near $\frac{n}{\alpha}$ such that $\delta =\frac{1}{q_{0}}$ satisfies
the hypothesis of Lemma 4.11 in \cite{N-S} to get
\[
\lesssim \sum_{i=1}^{m} \mathcal{A}\left( \left\{ \lambda_{j}\right\} _{j=1}^{\infty },\left\{
Q_{j i}^{\ast}\right\} _{j=1}^{\infty },q(.)\right)
\]
Lemma 5 gives
\[
\lesssim m \mathcal{A}\left( \left\{ \lambda_{j}\right\} _{j=1}^{\infty },\left\{
Q_{j}\right\} _{j=1}^{\infty },q(.)\right)
\]
now from Lemma 7,
\[
\lesssim \mathcal{A}\left( \left\{ \lambda_{j}\right\} _{j=1}^{\infty },\left\{
Q_{j}\right\} _{j=1}^{\infty },p(.)\right) \lesssim \left\Vert f\right\Vert
_{H^{p(.)}}.
\]%
\newline
To study $II,$ we observe that
\begin{eqnarray*}
\left\Vert II\right\Vert _{q(.)} = \left\Vert
\sum\limits_{j=1}^{\infty}\sum\limits_{k=1}^{m}\lambda_{j}\chi _{R_{jk}}\left( .\right)
\frac{\left( M_{\frac{\alpha n}{n+d+1}}\left( \chi _{Q_{j}}\right)
(A_{k}^{-1} .)\right) ^{\frac{n+d+1}{n}}}{\left\Vert \chi _{Q_j}\right\Vert
_{p(.)}}\right\Vert _{q(.)} \\
\lesssim m \left\Vert \left\{ \sum\limits_{j=1}^{\infty}\lambda_{j}%
\frac{\left( M_{\frac{\alpha n}{n+d+1}}\left( \chi _{Q_{j}}\right)
(A_{k}^{-1} .)\right) ^{\frac{n+d+1}{n}}}{\left\Vert \chi _{Q_{j}}\right\Vert
_{p(.)}}\right\} ^{\frac{n}{n+d+1}}\right\Vert _{\frac{n+d+1}{n}q(A_{k}.)}^{%
\frac{n+d+1}{n}}
\end{eqnarray*}%
\[
\lesssim \left\Vert \left\{ \sum\limits_{j=1}^{\infty}\lambda_{j}\frac{\chi _{Q_{j}}}{%
\left\Vert \chi _{Q_{j}}\right\Vert _{p(.)}}\right\} ^{\frac{n}{n+d+1}%
}\right\Vert _{\frac{n+d+1}{n}p(A_{k}.)}^{\frac{n+d+1}{n}}=\left\Vert
\sum\limits_{j=1}^{\infty}\lambda_{j}\frac{\chi _{Q_{j}}}{\left\Vert \chi
_{Q_{j}}\right\Vert _{p(.)}}\right\Vert _{p(.)}
\]%
\[
\lesssim \mathcal{A}\left( \left\{ \lambda_{j}\right\} _{j=1}^{\infty },\left\{
Q_{j}\right\} _{j=1}^{\infty },p(.)\right) \lesssim \left\Vert f\right\Vert
_{H^{p(.)}},
\]%
where the second inequality follows from Lemma 6, since $\frac{n+d+1}{n}
\underline{q}>1,$ the third inequality follows from Remark 4.4 in \cite{N-S}
and the second equality follows since $p(A_{k}x)\equiv p(x).$ Thus $$\| T_{\alpha} f \|_{q(.)} \lesssim \| f\|_{H^{p(.)}}$$
for all $f \in H^{p(.)} \cap L^{p_0}(\mathbb{R}^{n})$, so the theorem follows from the density of $H^{p(.)} \cap L^{p_0}(\mathbb{R}^{n})$ in $H^{p(.)}(\mathbb{R}^{n})$.
\end{proof}

\begin{remark} Observe that Theorem 8 still holds for $m=1$ and $0 < \alpha < n$. In particular, if $A_1=I$, then the Riesz potential is bounded from
$H^{p(.)}(\mathbb{R}^{n})$ into $L^{q(.)}(\mathbb{R}^{n})$.
\end{remark}

\begin{remark}

Suppose $h:\mathbb{R}\rightarrow \left( 0,\infty \right) $ that satisfies (%
\ref{log local}) and (\ref{log fuera p infinito}) on $\mathbb{R}$ and $%
0<h_{-}\leq h_{+}<\frac{n}{\alpha }.$ Let $p(x)=h(\left\vert x\right\vert )$
for $x\in \mathbb{R}^{n}$ and for $m>1$ let $A_{1},...,A_{m}$ be $n\times n$
orthogonal matrices such that $A_{i}-A_{j}$ is invertible for $i\neq j.$ It
is easy to check that (\ref{log local}),and (\ref{log fuera p infinito})
hold for $p$ and also that $0<p_{-}\leq p_{+}<\frac{n}{\alpha }$ and $%
p(A_{i}x)\equiv p(x),$ $1\leq i\leq m.$

Another non trivial example of exponent functions and invertible matrices
satisfying the hypothesis of the theorem is the following:

We consider $m=2,$
$p(.):\mathbb{R}^{n}\rightarrow \left( 0,\infty \right) $ that satisfies (%
\ref{log local}) and (\ref{log fuera p infinito}) , $0<p_{-}\leq p_{+}<\frac{%
n}{\alpha }$, and then we take $p_{e}(x)=p(x)+p(-x),$ $A_{1}=I$ and $%
A_{2}=-I.$
\end{remark}

\section{$H^{p(.)}\left( \mathbb{R}^{n}\right) -H^{q(.)}\left( \mathbb{R}%
^{n}\right) $ boundedness of the Riesz potential}

For $0<\alpha <n,$ let $I_{\alpha }$ be the fractional integral operator (or
Riesz potential) defined by
\begin{equation}
I_{\alpha }f(x)=\int_{\mathbb{R}^{n}} \frac{1}{\left\vert x-y\right\vert ^{n-\alpha }}f(y)dy,
\label{Ia}
\end{equation}%
$f\in L^{s}(\mathbb{R}^{n}),$ $1\leq s < \frac{n}{\alpha}$. A well known result of Sobolev gives the
boundedness of $I_{\alpha }$ from $L^{p}(\mathbb{R}^{n})$ into $L^{q}(%
\mathbb{R}^{n})$ for $1<p<\frac{n}{\alpha }$ and $\frac{1}{q}=\frac{1}{p}-%
\frac{\alpha }{n}.$ In \cite{C-F} C. Capone, D. Cruz Uribe and A. Fiorenza
extend this result to the case of Lebesgue spaces with variable exponents $%
L^{p(.)}.$ In \cite{S-W} E. Stein and G. Weiss used the theory of harmonic
functions of several variables to prove that these operators are bounded
from $H^{1}(\mathbb{R}^{n})$ into $L^{\frac{n}{n-\alpha }}(\mathbb{R}^{n}).$
In \cite{T-W}, M. Taibleson and G. Weiss obtained the boundedness of the
Riesz potential $I_{\alpha }$ from the Hardy spaces $H^{p}(\mathbb{R}^{n})$
into $H^{q}(\mathbb{R}^{n}),$ for $0<p<1$ and $\frac{1}{q}=\frac{1}{p}-\frac{%
\alpha }{n}.$ We extend these results to the context of Hardy spaces with
variable exponents. The main tools that we use are Lemma 7 and the molecular
decomposition developed in \cite{N-S}.

\begin{definition}
(Molecules) Let $0<p_{-}\leq p_{+}<p_{0}\leq \infty ,$ $p_{0}\geq 1$ and $%
d\in \mathbb{Z\cap }\left[ d_{p(.)},\infty \right) $ be fixed. One says that
$\mathfrak{M}$ is a $(p(.),p_{0},d)$ molecule centered at a cube $Q\,$%
centered at $z$ if it satisfies the following conditions.
\end{definition}

$1)$ On $2\sqrt{n}Q,$ $\mathfrak{M}$ satisfies the estimate $\left\Vert
\mathfrak{M}\right\Vert _{L^{p_{0}}(2\sqrt{n}Q)}\leq \frac{\left\vert
Q\right\vert ^{\frac{1}{p_{0}}}}{\left\Vert \chi _{Q}\right\Vert _{p(.)}}.$

$2)$ Outside $2\sqrt{n}Q,$ we have $\left\vert \mathfrak{M}\left( x\right)
\right\vert \leq \frac{1}{\left\Vert \chi _{Q}\right\Vert _{p(.)}}\left( 1+%
\frac{\left\vert x-z\right\vert }{l(Q)}\right) ^{-2n-2d-3}.$ This condition
is called the decay condition.

$3)$ If $\mathbf{\beta }$ is a multiindex with $\left\vert \mathbf{\beta }%
\right\vert \leq d,$ then we have
\[
\int\limits_{\mathbb{R}^{n}}x^{\mathbf{\beta }}\mathfrak{M}\left( x\right)
dx=0.
\]%
This condition is called the moment condition.

\begin{theorem}
Let $0<\alpha <n$ and let $I_{\alpha }$ be defined by (\ref{Ia}). If $p(.)$
is a measurable function that satisfies (\ref{log local}), (\ref{log fuera p
infinito}) and $0<p_{-}\leq p_{+}<\frac{n}{\alpha }$ and if $\frac{1}{q(.)}=\frac{1}{p(.)}-\frac{\alpha }{n}$,
then $I_{\alpha }$ can be extended to an $H^{p(.)}\left( \mathbb{R}^{n}\right) - H^{q(.)}\left(
\mathbb{R}^{n}\right)$ bounded operator.
\end{theorem}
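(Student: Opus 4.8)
The plan is to carry the Taibleson--Weiss scheme \cite{T-W} over to the variable setting: combine the atomic decomposition of $H^{p(.)}$ with the molecular decomposition of $H^{q(.)}$ from \cite{N-S}, the bridge being the fact that $I_{\alpha}$ maps a $(p(.),p_{0},d)$-atom to a fixed multiple of a $(q(.),q_{0},d')$-molecule.

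\emph{Set-up.} Fix $\max\{1,p_{+}\}<p_{0}<\frac{n}{\alpha}$ and let $q_{0}$ be given by $\frac{1}{q_{0}}=\frac{1}{p_{0}}-\frac{\alpha}{n}$; then $q_{+}<q_{0}<\infty$ and $q_{0}>1$. Since $p$ satisfies (\ref{log local}), (\ref{log fuera p infinito}) and $0<p_{-}\leq p_{+}<\frac{n}{\alpha}$, so does $q(.)$, with $0<q_{-}\leq q_{+}<\infty$ and $q_{-}>p_{-}$, hence $d_{q(.)}\leq d_{p(.)}$. Put $d'=d_{q(.)}$ and fix an integer $d\geq d_{p(.)}$ taken large enough (for instance $d\geq n+\alpha+2d'+2$); by Theorem 4.6 in \cite{N-S} this choice is harmless. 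Since $H^{p(.)}\cap L^{p_{0}}$ is dense in $H^{p(.)}$, it suffices to prove $\|I_{\alpha}f\|_{H^{q(.)}}\lesssim\|f\|_{H^{p(.)}}$ for $f\in H^{p(.)}\cap L^{p_{0}}$. For such $f$, Theorem 4.6 in \cite{N-S} gives $f=\sum_{j}\lambda_{j}a_{j}$ with $(p(.),p_{0},d)$-atoms $a_{j}$ supported on cubes $Q_{j}=Q(z_{j},r_{j})$, the series converging in $H^{p(.)}$ and in $L^{p_{0}}$, and $\mathcal{A}(\{\lambda_{j}\},\{Q_{j}\},p(.))\lesssim\|f\|_{H^{p(.)}}$.

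\emph{From atoms to molecules.} The claim is that $C^{-1}I_{\alpha}a_{j}$ is a $(q(.),q_{0},d')$-molecule centered at $Q_{j}$, with $C$ independent of $j$. For the size estimate on $2\sqrt{n}Q_{j}$, the Sobolev boundedness $I_{\alpha}\colon L^{p_{0}}\to L^{q_{0}}$ (classical; see also \cite{C-F}), the atom bound $a_{2})$, and Lemma 4 (which gives $\|\chi_{Q_{j}}\|_{q(.)}\sim\|\chi_{Q_{j}}\|_{p(.)}|Q_{j}|^{-\alpha/n}$) yield
\[
\|I_{\alpha}a_{j}\|_{L^{q_{0}}(2\sqrt{n}Q_{j})}\lesssim\|a_{j}\|_{p_{0}}\lesssim\frac{|Q_{j}|^{1/p_{0}}}{\|\chi_{Q_{j}}\|_{p(.)}}\sim\frac{|Q_{j}|^{1/q_{0}}}{\|\chi_{Q_{j}}\|_{q(.)}}.
\]
For the decay condition, if $x\notin 2\sqrt{n}Q_{j}$ one Taylor-expands $y\mapsto|x-y|^{-(n-\alpha)}$ to order $d$ about $z_{j}$; the vanishing moments of $a_{j}$ kill the polynomial part and the remainder estimate gives $|I_{\alpha}a_{j}(x)|\lesssim r_{j}^{d+1}|x-z_{j}|^{-(n-\alpha)-(d+1)}\|a_{j}\|_{1}$. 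Bounding $\|a_{j}\|_{1}\leq|Q_{j}|^{1-1/p_{0}}\|a_{j}\|_{p_{0}}\leq|Q_{j}|/\|\chi_{Q_{j}}\|_{p(.)}$ and using Lemma 4 once more, the choice of $d$ large makes the powers of $r_{j}$ and $|x-z_{j}|$ work out so that $|I_{\alpha}a_{j}(x)|\lesssim\|\chi_{Q_{j}}\|_{q(.)}^{-1}(1+|x-z_{j}|/r_{j})^{-2n-2d'-3}$. Finally, the moment condition $\int x^{\beta}I_{\alpha}a_{j}\,dx=0$ for $|\beta|\leq d'$ holds: the integral converges absolutely because $d$ was chosen with $d+1-\alpha>d'$, and the vanishing follows from the vanishing moments of $a_{j}$ by the standard argument of \cite{T-W} (on the Fourier side $\widehat{I_{\alpha}a_{j}}(\xi)=c\,|\xi|^{-\alpha}\widehat{a_{j}}(\xi)$ vanishes to order $d+1-\alpha>d'$ at the origin).

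\emph{Conclusion and main obstacle.} Since $f=\sum_{j}\lambda_{j}a_{j}$ in $L^{p_{0}}$ and $I_{\alpha}\colon L^{p_{0}}\to L^{q_{0}}$ is bounded, $I_{\alpha}f=\sum_{j}\lambda_{j}I_{\alpha}a_{j}$ in $L^{q_{0}}$, hence in $S^{\prime}(\mathbb{R}^{n})$. Applying the molecular decomposition of \cite{N-S} with the exponent $q(.)$ (whose hypotheses are met by $q(.)$, $q_{0}$ and $d'$), the series $\sum_{j}\lambda_{j}I_{\alpha}a_{j}$ converges in $H^{q(.)}$ and $\|I_{\alpha}f\|_{H^{q(.)}}\lesssim\mathcal{A}(\{\lambda_{j}\},\{Q_{j}\},q(.))$; then Lemma 7 gives $\mathcal{A}(\{\lambda_{j}\},\{Q_{j}\},q(.))\lesssim\mathcal{A}(\{\lambda_{j}\},\{Q_{j}\},p(.))\lesssim\|f\|_{H^{p(.)}}$, proving the bound on the dense subspace and hence the extension. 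The heart of the matter is the atom-to-molecule step, namely verifying the three molecular conditions with constants uniform in $j$ --- especially matching the decay exponent $-2n-2d'-3$ (which forces $d$ large, harmless by Theorem 4.6 of \cite{N-S}) and establishing the vanishing moments of $I_{\alpha}a_{j}$, where the non-integer homogeneity of the Riesz kernel must be handled as in \cite{T-W}; a secondary point is confirming that $q(.)$, $q_{0}$ and $d'=d_{q(.)}$ satisfy the hypotheses of the molecular decomposition theorem in \cite{N-S}.
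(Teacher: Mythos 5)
Your proposal is correct and follows essentially the same route as the paper: atomic decomposition of $H^{p(.)}\cap L^{p_{0}}$ with high-order atoms, the verification that $I_{\alpha}$ sends such an atom to a fixed multiple of a $(q(.),q_{0},d_{q(.)})$-molecule (size via Sobolev and Lemma 4, decay via the Taylor remainder, moments via Taibleson--Weiss), and then the molecular decomposition theorem of Nakai--Sawano combined with Lemma 7 to pass from $\mathcal{A}(\cdot,\cdot,q(.))$ to $\mathcal{A}(\cdot,\cdot,p(.))$. The paper's proof makes the same choices, fixing $d=2d_{q(.)}+2+\alpha+n$ explicitly where you leave $d$ "large enough."
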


\begin{proof}
Since $2d_{q(.)} + 2 + \alpha + n \geq d_{p(.)}$, as Theorem 8, given $p_0$ such that $max \{1, p_{+} \} < p_0 < \frac{n}{\alpha}$, we can decompose a distribution $f \in H^{p(.)}(\mathbb{R}^{n}) \cap L^{p_0}(\mathbb{R}^{n})$ as $f = \sum_{j=1}^{\infty} \lambda_j a_j$, where $a_j$ is an $(p(.), p_{0}, 2d_{q(.)} + 2 + \alpha + n)$-atom supported on the cube $Q_j$, where the convergence is in $H^{p(.)}(\mathbb{R}^{n})$ and in $L^{p_0}(\mathbb{R}^{n})$ so it is enough to show that if $a$ is an $(p(.), p_{0}, 2d_{q(.)} + 2 + \alpha + n)$-atom supported on the cube $Q$ centered at $z$, then $cI_{\alpha}(a)$ is a $(q(.), q_{0}, d_{q(.)})$-molecule centered at a cube $Q$ for some fixed constant $c > 0$ independent of the atom $a$. Indeed, since $f = \sum_{j=1}^{\infty} \lambda_j a_j$ in $L^{p_0}(\mathbb{R}^{n})$ then $I_{\alpha}f = \sum_{j=1}^{\infty} \lambda_{j} I_\alpha(a_{j})$ in $L^{\frac{n p_{0}}{n- \alpha p_0}}(\mathbb{R}^{n})$ and thus $I_{\alpha}f = \sum_{j=1}^{\infty} \lambda_{j} I_\alpha(a_{j})$ in $\mathcal{S}'$. Now from Theorem 5.2 in \cite{N-S} we obtain $$\| I_{\alpha}f \|_{H^{q(.)}} \lesssim \mathcal{A}(\{ \lambda_j \}, \{ Q_j\}, q(.))$$
by Lemma 7 and Theorem 4.6 in \cite{N-S} we have
$$ \lesssim \mathcal{A}(\{ \lambda_j \}, \{ Q_j\}, p(.)) \lesssim \| f \|_{H^{p(.)}},$$
for all $f \in H^{p(.)} \cap L^{p_0}(\mathbb{R}^{n})$, so the theorem follows from the density of $H^{p(.)} \cap L^{p_0}(\mathbb{R}^{n})$ in $H^{p(.)}(\mathbb{R}^{n})$.

Now we show that there exists $c>0$ such that $cI_{\alpha
}\left( a\right) $ is a $\left( q(.),q_{0},d_{q_{(.)}}\right) -$molecule%
\newline
$1)$ The Sobolev theorem and Lemma 4 give
\[
\left\Vert I_{\alpha }\left( a\right) \right\Vert _{L^{q_{0}}(2\sqrt{n}%
Q)}\lesssim \left\Vert a\right\Vert _{L^{p_{0}}}\leq \frac{\left\vert
Q\right\vert ^{\frac{1}{p_{0}}}}{\left\Vert \chi _{Q}\right\Vert _{p(.)}}%
\lesssim \frac{\left\vert Q\right\vert ^{\frac{1}{q_{0}}}}{\left\Vert \chi
_{Q}\right\Vert _{q(.)}}.
\]%
\newline
$2)$ Denote $d=2d_{q(.)}+2+\alpha +n.$ As in the proof of Theorem 8 we
get, for $x$ outside $2\sqrt{n}Q,$ that
\begin{eqnarray*}
\left\vert I_{\alpha }\left( a\right) (x)\right\vert  &\lesssim &\frac{%
l(Q)^{d+1}}{\left\vert x-z\right\vert ^{n-\alpha +d+1}}\left\Vert
a\right\Vert _{1}\lesssim \frac{1}{\left\Vert \chi _{Q}\right\Vert _{q(.)}}%
\left( \frac{l(Q)}{\left\vert x-z\right\vert }\right) ^{^{n-\alpha +d+1}} \\
&=&\frac{1}{\left\Vert \chi _{Q}\right\Vert _{q(.)}}\left( \frac{l(Q)}{%
\left\vert x-z\right\vert }\right) ^{^{2n+2d_{q(.)}+3}} \\
&\lesssim &\frac{1}{\left\Vert \chi _{Q}\right\Vert _{q(.)}}\left( \frac{l(Q)%
}{l(Q)+\left\vert x-z\right\vert }\right) ^{^{2n+2d_{q(.)}+3}} \\
&=&\frac{1}{\left\Vert \chi _{Q}\right\Vert _{q(.)}}\left( 1+\frac{%
\left\vert x-z\right\vert }{l(Q)}\right) ^{^{-2n-2d_{q(.)}-3}},
\end{eqnarray*}%
\newline
where the third inequality follows since $x$ outside $2\sqrt{n}Q$ implies $%
l(Q)+\left\vert x-z\right\vert <2\left\vert x-z\right\vert .$\newline
$3)$ The moment condition was proved by Taibleson and Weiss in \cite{T-W}.
\end{proof}

\end{document}